\documentclass[11pt, twoside, a4paper]{article}
\usepackage[utf8]{inputenc}
\usepackage[english]{babel}
\usepackage[title]{appendix}

\usepackage{amssymb,amsmath,amsthm}
\usepackage{bigints}

\usepackage{dsfont}
\usepackage[mathscr]{eucal} 
\usepackage{graphicx}

\usepackage{xcolor}
\usepackage[hidelinks]{hyperref}
\hypersetup{
 colorlinks,
 linkcolor={red!50!black},
 citecolor={blue!50!black},
 urlcolor={blue!80!black}
}
\usepackage{pdfsync}

\usepackage{enumerate} 
\usepackage{esint}

\theoremstyle{plain}
\newtheorem{theorem}{Theorem}[section]
\newtheorem{definition}[theorem]{Definition}

\newtheorem{lemma}[theorem]{Lemma}

\newtheorem{proposition}[theorem]{Proposition}
\theoremstyle{remark}
\newtheorem{remark}[theorem]{Remark}

\numberwithin{equation}{section}
\allowdisplaybreaks

\def\R{{\mathbb R}}
\def\EE{{\mathcal E}}
\def\RR{{\mathcal R}}

\newcommand{\dd}{\,\mathrm{d}}

\renewcommand{\div}{\operatorname{div}}

\renewcommand{\leq}{\leqslant}\renewcommand{\le}{\leqslant}
\renewcommand{\geq}{\geqslant}\renewcommand{\ge}{\geqslant}
\renewcommand{\epsilon}{\varepsilon}

\newcommand{\dt}{\partial_t}
\newcommand{\eps}{\varepsilon}

\newcommand{\ap}{\alpha_+}
\newcommand{\am}{\alpha_-}
\newcommand{\rp}{\rho_+}
\newcommand{\rmi}{\rho_-}
\newcommand{\gp}{\gamma_{+}}
\newcommand{\gm}{\gamma_-}

\title{Low-Mach-number limit of a compressible two-phase flow system with algebraic closure} 
\author{Cassandre Lebot}
\def\adrese{
\noindent CNRS, LAMA, ISTerre, Univ. Savoie Mont Blanc, 73000 Chamb\'ery, France.\\
{\it Email address:} \texttt{Cassandre.Lebot@univ-smb.fr}
}

\date{\today}

\begin{document}
\maketitle

\begin{abstract}
We analyse a bi-fluid isentropic compressible Navier-Stokes system with barotropic pressure laws in a two-phase framework with equal pressure and single velocity. We focus on the rigorous analysis of the low Mach number limit under well-prepared initial data.
Our main result shows that, as the Mach number tends to zero, the partial densities converge to constant states while the velocity field converges to a divergence-free vector field, and we recover the incompressible non-homogenous fluid system. The volume fractions remain nontrivial and are transported by the limit flow.

Our method relies on the introduction of suitable modulated quantities and on two relative entropy functionals adapted to the two-phase structure: a standard entropy commonly used in the literature, and a logarithmic entropy, which is essential here as the former is not sufficient due to the structure of the underlying two-phase system.

\end{abstract}

\tableofcontents

\section{Introduction}

This paper addresses the rigorous justification of the low-Mach-number limit for a compressible two-phase flow system. The model under consideration is a compressible isentropic Navier-Stokes system describing the evolution of a mixture of two fluids sharing a common velocity and pressure field, with their densities governed by coupled mass balance equations under barotropic pressure laws.

Physically, the study of two-phase compressible flows in the low Mach number regime is motivated by a wide range of industrial and geophysical applications, where fluids mixtures move at speeds much lower than the speed of sound. Mathematically, the problem provides a natural extension of the classical incompressible limit for single-phase compressible Navier–Stokes equations. However, the presence of several phases introduces additional variables and couplings, resulting in a richer and more complex structure than in the single-phase setting.

In most modeling frameworks for such mixtures, each phase satisfies its own system of balance laws, supplemented by closure relations that account for thermodynamic quantities and interfacial interactions. Classical examples include the two–fluid models developed by Ishii and Hibiki \cite{ishii2010thermo} and the hyperbolic system introduced by Baer and Nunziato \cite{baer1986two}. Depending on the modeling assumptions, these closures may introduce additional evolution equations or take the form of algebraic relations linking pressure, densities and volume fractions.

The presence of several phases introduces new challenges: the system typically involves several densities and volume fractions transported by the flow, interacting through the pressure and momentum equations. This additional structure makes the rigorous justification of the low-Mach-number limit substantially more delicate, as the coupling between phases complicates the derivation of uniform estimates and the control of nonlinear interactions.

\subsection{Main result}

For our study we consider a compressible isentropic two-phase flow Navier-Stokes system with algebraic closure. Let us denote by $\eps$ the Mach number. After nondimensionalization, see Section~\ref{nondim} below for details, denoting the phases by $+$ and $-$, the system reads as
\begin{equation}
 \left\{
 \begin{aligned}\label{system_eps}
 &\partial_t (\alpha_+^{\epsilon} \rho_+^{\epsilon}) + \div(\alpha_+^{\epsilon}\rho_+^{\epsilon}u^{\epsilon}) = 0,\\
 &\partial_t (\alpha_-^{\epsilon} \rho_-^{\epsilon}) + \div(\alpha_-^{\epsilon}\rho_-^{\epsilon}u^{\epsilon}) = 0,\\
 &\partial_t ((\alpha_+^{\epsilon} \rho_+^{\epsilon} + \alpha_-^{\epsilon}\rho_-^{\epsilon})u^{\epsilon}) + \div((\alpha_+^{\epsilon}\rho_+^{\epsilon} + \alpha_-^{\epsilon}\rho_-^{\epsilon})u^{\epsilon}\otimes u^{\epsilon})+ \frac{1}{\epsilon^2} \nabla \Big(\ap^\eps p_+(\rho_+^{\epsilon}) + \am^\eps p_-(\rmi^\eps)\Big) \\
 &\hspace{7cm} = \mu \Delta u^{\epsilon} + (\mu + \lambda)\nabla \div u^{\epsilon},\\
 &p_i(\rho_i^{\epsilon}) = (\rho_i^{\epsilon})^{\gamma_i},\qquad \rho_i^\epsilon \geq 0, \qquad i=+,-\\
 &p_+(\rho_+^{\epsilon}) = p_-(\rho_-^{\epsilon}),\\
 &\alpha_+^{\epsilon} + \alpha_-^{\epsilon} = 1,\qquad 0\leq \ap^\epsilon \leq 1,
 \end{aligned}
 \right .
\end{equation}
where $\rho_i^\eps$, $p_i$, $\alpha_i^\eps$ are the density, pressure and volume fraction of phase $i$ and we consider one single velocity $u^\eps$. The real numbers $\mu$ and $\lambda$ are the constant shear and bulk viscosities verifying, as usual,
\[
\mu >0 \quad \text{and} \quad \lambda + 2 \mu>0.
\]
The system is endowed with the Dirichlet boundary condition
\begin{equation}\label{eq.bc}
 u^\eps(t,x)\lvert_{\partial \Omega} = 0
\end{equation}
on $(0,T) \times \partial \Omega$, where $\Omega$ is a sufficiently smooth bounded domain of $\mathbb{R}^3$ and $T>0$ is arbitrary large.

As usual in the mathematical studies of bi-fluid models, we introduce the fractional masses $R_\pm^\eps := \alpha_\pm^\eps \rho_\pm^\eps$, the mixture density by $\rho^\eps := R_+^\eps + R_-^\eps$ and the momentum by $m^\eps:=\rho^\eps u^\eps$. As emphasized in \cite{BreschHuangLi}, it is important to be able to pass back and forth between the variables $(\rho_{+}^\eps,R_{+}^\eps, m^\epsilon)$ and $(\alpha_\pm^\eps, \rho_\pm^\eps,u^\eps)$.

\begin{definition}\label{rem.compa}
 For $0\leq R_+^\eps \leq \rho_+^\eps$ and $m^\epsilon$ given, if $\rho_+^\eps>0$, we may define $\alpha_+^\eps=R_+^\eps/\rho_+^\eps\in [0,1]$. Otherwise, we set $\alpha_+^\eps=1/2$ on $\{\rho_+^\eps=0\}$. Next we define $\alpha_-^\eps=1-\alpha_+^\eps\in[0,1]$, $\rho_-^\eps =(\rho_+^\eps)^{\gamma_+/\gamma_-}\geq 0$ (by the equality of pressures), $R_-^\epsilon=\alpha_-^\eps \rho_-^\eps$ and $\rho^\eps = R_+^\eps+R_-^\epsilon$. If $\rho_+^\eps>0$, we get that $\rho_-^\eps,\rho^\eps>0$ and we may define the velocity $u^\eps = m^\eps/\rho^\eps$. Otherwise, we set $u^\eps=0$ on $\{\rho_+^\eps=0\}$.

Any functions $(\rho_{+,0}^\eps,R_{+,0}^\eps, m_0^\epsilon)\in L^{\gamma_+}(\Omega)\times L^{\gamma_+}(\Omega)\times L^{1}(\Omega)$ are compatibles for the bi-fluid compressible system if
\begin{equation}\label{compatibilityeps}
 0\leq R_{+,0}^\eps \leq \rho_{+,0}^\eps\text{\ a.e.}, \quad \sqrt{\rho_0^\eps} u_0^\eps\in L^2(\Omega),
\end{equation}
where we have defined $\alpha_{\pm,0}^\eps\in [0,1]$, $\rho_{-,0}^\eps\geq 0$, $\rho^\eps_0\geq 0$ and $u_0^\eps$ from $(\rho_{+,0}^\eps,R_{+,0}^\eps, m_0^\epsilon)$.
 \end{definition}
For such compatible initial data, we consider a global weak solution ``à la Leray'', see Definition~\ref{defp} for the precise meaning.

System~\eqref{system_eps} is a particular case of the system studied by \cite{novotny2020weak} for which they obtain the existence of global-in-time finite-energy weak solutions for large initial data, but adding some conditions on $\gamma_\pm$. Note that \cite{bresch2019finite} also gives a proof of the existence of global-in-time weak solutions for a semi-stationary bifluid system when $\gamma_+\neq \gamma_-$.

Let us now investigate the Mach-number limit. As shown in \cite{lebot2025low}, when we let the Mach number go to zero in \eqref{system_eps}, we obtain formally the following non-homogeneous viscous incompressible Navier-Stokes equations:
\begin{equation}\label{limitsys}
 \begin{cases}
 \rp = C_0^{\frac{1}{\gp}},\qquad \rmi = C_0^{\frac{1}{\gm}},\\
 \div u = 0,\\
 \dt \ap + (u \cdot \nabla) \ap =0,\\
 \left(\rp \ap + \rmi \am \right) \left( \dt u + (u \cdot \nabla ) u\right) + \nabla \Pi = \mu \Delta u ,\\
 \ap + \am = 1,
 \end{cases}
\end{equation}
where $C_0>0$ is a real number and $u$ satisfies the Dirichlet boundary condition \eqref{eq.bc}.
Let us notice that the first line implies the pressure equality: 
\[
p_+(\rp)=p_-(\rmi).
\]
The densities $\rho_\pm$ are now constant, and the continuity equations on $(\alpha_\pm^\eps\rho_\pm^\eps)$ became a transport equation on $\alpha_\pm$ by a divergence free vector field. It is then clear that the initial assumption $\alpha_{\pm,0}\in [0,1]$ remains true for all time $t>0$.

Denoting again the mixture density by $\rho:=\rp \ap + \rmi \am$ we get the classical non-homogeneous viscous incompressible Navier-Stokes equations where $\rho$ satisfies the transport equation by $u$.

The existence of strong solutions to this system has been extensively studied in the literature (see Section~\ref{sec.incomp}). For completeness, we recall later an existence result (Theorem~\ref{sol-lim}) taken from \cite{salvi1991equations}, which specifies the notion of strong solution considered in this work.

In the next theorem, we need to define the notion of well prepared data, which is related to the entropies used. Throughout the article, we will use the following quantities:
\begin{itemize}
 \item the energy
 \begin{equation}\label{def-Energy}
 E^\eps(t) := \frac{1}{2} \int_{\Omega} \Big(\rho^\varepsilon |u^\eps|^2\Big)(t,x) \dd x
 + \sum_{i=+,-} \int_{\Omega} \frac{\alpha_i^{\epsilon}(t,x)}{\epsilon^2} H_i(\rho_i^\eps | \rho_i)(t,x)\dd x ,
 \end{equation}
 with, for $i=+,-$,
\begin{equation}\label{H}
 H_i(\rho_i^\eps | \rho_i) := \frac{1}{\gamma_i-1} \left( (\rho_i^{\epsilon})^{\gamma_i} - \rho_i^{\gamma_i} - \gamma_i\rho_i^{\gamma_i-1}(\rho_i^{\epsilon}-\rho_i) \right).
\end{equation}
 \item the natural modulated entropy
 \begin{equation}\label{def-E1}
 \begin{aligned}
 \EE_1^\eps(t) &= \EE_1\Big(\rho^{\epsilon}_+,\rho^{\epsilon}_-,\alpha^{\epsilon}_+,u^{\epsilon}| \rho_+,\rho_-,\alpha_+,u\Big) \\
 &:=
 \frac{1}{2} \int_{\Omega} \rho^{\epsilon}(t,x) |u^{\epsilon} - u|^2(t,x)\dd x 
 + \sum_{i=+,-} \int_{\Omega} \frac{\alpha_i^{\epsilon}(t,x)}{\epsilon^2} H_i(\rho_i^\eps | \rho_i)(t,x)\dd x .
 \end{aligned}
 \end{equation}
 \item the Kullback type relative entropy
 \begin{equation}\label{def-E2}
 \EE_2^\eps(t) =\EE_2\Big(\rho^{\epsilon}_+,\rho^{\epsilon}_-,\alpha^{\epsilon}_+,u^{\epsilon}| \rho_+,\rho_-,\alpha_+,u\Big) :=\sum_{i=+,-} \int_\Omega \Big(R_{i}^\varepsilon \ln \frac{R_{i}^\varepsilon}{R_{i}} - R_i^\eps+R_i\Big)(t,x)\dd x .
 \end{equation}
 \end{itemize}
\begin{remark}
Observe that the energy functional $E^\eps$ is the natural energy associated with the system. A detailed formal derivation of the corresponding energy identity for smooth solutions can be found in \cite{lebot2025low}, see also Section~\ref{conservation-energy}. The quantity $\mathcal{E}_1^\eps$ is also standard (see \cite{lions1996mathematical}). The functional $\EE_2^\eps$ is less classical, although similar quantities appear in the work of P.-L. Lions on compressible models, where the strict convexity of $x\ln x - x + 1$ is used in compactness arguments.
\end{remark}

We can now state the main result of this paper, where we denote the fractional masses of the limit system by $R_{\pm} = \alpha_{\pm} \rho_{\pm}$.

\begin{theorem}\label{mainth}
 [Singular limit of the compressible two-phase system] Let $C_0>0$, $\gamma_\pm\geq 2$ be three real numbers and $\alpha_{\pm,0}$ be two functions such that $\gamma_+\neq \gamma_-$ and $0<\underline{a}\leq \alpha_{\pm,0} \leq \overline{a} <1$. Let $(\rho_+,\rho_-,\alpha_+,u)$ be a solution of the non-homogeneous incompressible Navier-Stokes system~\eqref{limitsys}, verifying the regularity properties of the limit solution, see Theorem~\ref{sol-lim}, and let $T>0$ be the existence time of this solution.
 
For each $\varepsilon>0$, let $(\rho_{+,0}^\varepsilon, R_{+,0}^\varepsilon, m_0^\varepsilon)\in L^{\gamma_+}(\Omega)\times L^{\gamma_+}(\Omega)\times L^1(\Omega)$ be compatible initial data for the bifluid compressible system in the sense of Definition~\ref{rem.compa}, and let $(\rho^{\epsilon}_+,\rho^{\epsilon}_-,\alpha^{\epsilon}_+,u^{\epsilon})$ be a global weak solution of the compressible two-phase Navier--Stokes system~~\eqref{system_eps}, in the sense of Definition~\ref{defp}.

If we assume that the initial data are well prepared in the sense
\[
\EE_1^\eps(0) + \EE_2^\eps(0)\to 0\quad\text{as}\quad \eps\to 0\quad \text{and} \quad \Big(\int_\Omega R_{\pm,0}^\eps\Big)_\eps \text{ bounded,}
\]
then the following holds true when $\eps\to 0$
 \begin{align*}
\rho_i^\varepsilon &\longrightarrow \rho_i &&\text{strongly in } L^\infty(0,T; L^p(\Omega)), \\
\alpha^\varepsilon_i &\longrightarrow \alpha_i &&\text{strongly in } L^q(0,T; L^2(\Omega)),\\
R_\pm^\varepsilon &\longrightarrow R_\pm &&\text{strongly in } L^q(0,T;L^2(\Omega)),\\
u^\varepsilon &\longrightarrow u &&\text{strongly in } L^2(0,T; H^{1}_0(\Omega)),\\
\rho^\eps|u^\eps-u|^2 &\longrightarrow 0 &&\text{strongly in } L^q(0,T; L^1(\Omega)),
\end{align*}
 for all $i=+,-$, all $p< \gamma_i$ and all $q\in [1,\infty)$.
\end{theorem}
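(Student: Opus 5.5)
The plan is a relative entropy (modulated energy) argument combined with Gronwall's lemma, applied to the sum $\EE^\eps(t):=\EE_1^\eps(t)+\EE_2^\eps(t)$. The goal is an inequality of the form
\[
\EE^\eps(t) + \mu\int_0^t\!\!\int_\Omega |\nabla(u^\eps-u)|^2 \;\le\; \EE^\eps(0) + C\int_0^t \EE^\eps(s)\dd s + r_\eps(t), \qquad r_\eps\to 0 \text{ uniformly on } [0,T],
\]
with $C$ depending only on norms of the strong limit solution of Theorem~\ref{sol-lim}.

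\textbf{Estimate for $\EE_1^\eps$.} I first use the energy inequality for $E^\eps$, which is part of Definition~\ref{defp}. The limit solution $u$ (extended by its regularity) serves as a test function in the momentum equation, and $|u|^2/2$ and the constants $\rho_i^{\gamma_i-1}$ serve as test functions in the mass equations. Expanding $\frac12\rho^\eps|u^\eps-u|^2$ produces the usual terms:
\begin{itemize}
\item the convective term $\int\rho^\eps (u^\eps-u)\otimes(u^\eps-u):\nabla u$, bounded by $\|\nabla u\|_\infty\EE_1^\eps$;
\item the viscous cross terms, which combine into the dissipation of $u^\eps-u$;
\item a term $\int(\rho^\eps-\rho)(u^\eps-u)\cdot(\dt u+u\cdot\nabla u)$, coming from the fact that $u$ solves the momentum equation with density $\rho$ rather than $\rho^\eps$, after substituting $\rho(\dt u+u\cdot\nabla u)=\mu\Delta u-\nabla\Pi$;
\item the pressure terms $\eps^{-2}\int P^\eps\div u=0$, since $\div u=0$, and $\int\nabla\Pi\cdot(\rho^\eps/\rho)(u^\eps-u)$.
\end{itemize}
The linear part of the pressure in $H_i$ is handled with the mass equations. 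This is where the algebraic closure helps: $p_+(\rho_+^\eps)=p_-(\rho_-^\eps)$ makes $\sum_i\alpha_i^\eps\rho_i^{\gamma_i-1}\rho_i^\eps\gamma_i/(\gamma_i-1)$ interact with $\eps^{-2}\div u^\eps$ only through combinations controlled by $\eps^{-2}\sum\alpha_i^\eps H_i$. I expect this bookkeeping to go through using $\rho_+^{\gamma_+}=\rho_-^{\gamma_-}=C_0$.

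\textbf{Main obstacle.} The difficulty is the term involving $\rho^\eps-\rho$. Unlike the single-phase case, $\rho^\eps-\rho=\sum_i(\alpha_i^\eps\rho_i^\eps-\alpha_i\rho_i)$ does not vanish as $\rho_i^\eps\to\rho_i$, because $\alpha_i^\eps-\alpha_i$ is not controlled by $\EE_1^\eps$. This is why $\EE_2^\eps$ is needed.

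\textbf{Estimate for $\EE_2^\eps$.} Since $R_i$ is transported by the divergence-free field $u$, differentiating $\EE_2^\eps$ and using both mass equations gives
\[
\frac{d}{dt}\EE_2^\eps=\sum_i\int R_i^\eps (u^\eps-u)\cdot\nabla\ln R_i .
\]
This requires $\nabla\ln R_i\in L^\infty$, which holds by the bounds $\underline a\le\alpha_{i}\le\overline a$ propagated along characteristics. The right-hand side then splits as $\int (R_i^\eps-R_i)(u^\eps-u)\cdot\nabla\ln R_i$ plus $\int\alpha_i\rho_i\nabla\ln R_i\cdot(u^\eps-u)$, and the latter vanishes after summing against the structure, or is absorbed using $\div$ and Dirichlet conditions.

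\textbf{Closing the argument.} The key estimate is then
\[
|R_i^\eps-R_i||u^\eps-u|\lesssim \frac{(R_i^\eps-R_i)^2}{R_i^\eps+R_i}+R_i^\eps|u^\eps-u|^2 .
\]
This inequality is the step I expect to be delicate. The first term is controlled by $R\ln(R/\bar R)-R+\bar R$ where $R^\eps$ is at most of order $R$. Where $R^\eps$ is large, I instead use $|R^\eps-R|\lesssim$ the entropy, together with $|u^\eps-u|$ bounded via $H^1\subset L^6$ and absorbed into the dissipation using Young's inequality. The $\gamma_i\ge2$ integrability from $H_i$ gives the needed $L^2$-type bounds on $\rho_i^\eps-\rho_i$, of order $\eps$. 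Summing the two inequalities and applying Gronwall yields $\sup_t\EE^\eps\to0$ and $\nabla(u^\eps-u)\to0$ in $L^2L^2$.

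\textbf{Deducing the convergences.} From $H_i\gtrsim|\rho_i^\eps-\rho_i|^2\wedge|\rho_i^\eps-\rho_i|^{\gamma_i}$ and the lower bound $\alpha_i^\eps$ (recovered by combining both phases, since $\alpha_+^\eps+\alpha_-^\eps=1$ and the pressures are equal), I get $\rho_i^\eps\to\rho_i$ in $L^\infty L^p$ for $p<\gamma_i$ by interpolation with the uniform $L^\infty L^{\gamma_i}$ bound. The Csiszár–Kullback inequality, together with boundedness of masses and a uniform $L^\infty$ bound after truncation, gives $R_i^\eps\to R_i$ in $L^qL^2$; dividing, $\alpha_i^\eps=R_i^\eps/\rho_i^\eps\to\alpha_i$. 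Finally, the velocity convergence follows from Poincaré's inequality, and $\rho^\eps|u^\eps-u|^2\to0$ is exactly the vanishing of $\EE_1^\eps$.
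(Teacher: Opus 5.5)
\textbf{Main gap: the pressure term of the limit system.} Your proposal never handles this term. When you substitute $\rho(\dt u+u\cdot\nabla u)=\mu\Delta u-\nabla\Pi$ into $\int\rho^\eps(\dt u+u\cdot\nabla u)\cdot(u-u^\eps)$, you get the $(\rho^\eps-\rho)$-weighted terms, but also
\[
-\int_0^t\!\!\int_\Omega \nabla\Pi\cdot(u-u^\eps)\,\dd x\dd s=-\int_0^t\!\!\int_\Omega \Pi\,\div u^\eps\,\dd x\dd s .
\]
This term is linear in $u^\eps-u$, has no small factor, and is not controlled by $\EE_1^\eps+\EE_2^\eps$. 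Young's inequality only gives $C(\delta)\|\nabla\Pi\|_{L^2}^2+\delta\|\nabla(u^\eps-u)\|_{L^2}^2$. Gronwall then yields a uniform bound on $\EE_1^\eps+\EE_2^\eps$, but not its convergence to $0$, so your claim that $r_\eps\to0$ uniformly is unsupported.

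The missing ingredient is $\div u^\eps\rightharpoonup 0$, which is the genuinely low-Mach step. The paper obtains it by compactness:
\begin{itemize}
\item from the uniform bounds, $\alpha_i^\eps\rightharpoonup\bar\alpha_i$, and $R_i^\eps\to\bar\alpha_i\rho_i$ in $C([0,T];H^{-1}(\Omega))$ by Aubin--Lions;
\item $u^\eps-u\rightharpoonup v$ in $L^2(0,T;H^1_0(\Omega))$;
\item passing to the limit in both mass equations, dividing by the constants $\rho_i$ and summing with $\bar\alpha_++\bar\alpha_-=1$ gives $\div v=0$.
\end{itemize}
The paper then reruns Gronwall with a remainder $r_\eps(t)$ containing $\bigl|\int_0^t\!\int_\Omega\Pi\,\div(u-u^\eps)\bigr|$. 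This remainder tends to $0$ pointwise and is uniformly bounded, so dominated convergence gives $\EE_1^\eps+\EE_2^\eps\to0$ in $L^1(0,T)$, hence in $L^q(0,T)$. A quantitative alternative is also available. The quantity $\sigma^\eps:=\sum_i R_i^\eps/\rho_i-1=\sum_i\alpha_i^\eps(\rho_i^\eps-\rho_i)/\rho_i$ satisfies $\dt\sigma^\eps+\div((1+\sigma^\eps)u^\eps)=0$ and $\|\sigma^\eps\|_{L^2}^2\le C\eps^2\EE_1^\eps$, so you could integrate by parts in time. That route, however, needs regularity of $\dt\Pi$ not provided by Theorem~\ref{sol-lim}.

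\textbf{Second error: the $\EE_2^\eps$ identity.} Your formula drops the contribution $-\int R_i^\eps\div u^\eps$ coming from $\int R_i^\eps\ln R_i^\eps$, and it also has the wrong sign. The leftover $\int R_i\nabla\ln R_i\cdot(u^\eps-u)=\int\nabla R_i\cdot(u^\eps-u)$ does not vanish after summation, since $\sum_i\nabla R_i=\nabla\rho\neq0$. What actually happens is that it cancels against $-\int R_i\div u^\eps$, leaving
\[
\int_\Omega(R_i^\eps-R_i)\div(u-u^\eps)\dd x .
\]
Because this term involves $\nabla(u^\eps-u)$, your Kullback-weighted Young inequality does not close: the factor $(R_i^\eps+R_i)|\nabla(u^\eps-u)|^2$ is uncontrolled where $R_i^\eps$ is large. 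You need a genuine bound
\[
\|R_i^\eps-R_i\|_{L^2}^2\lesssim\EE_2^\eps+(\eps^2\EE_1^\eps)^{\beta_{1,i}}+(\eps^2\EE_1^\eps)^{\beta_{2,i}},
\]
as in Proposition~\ref{prop.estRL2}. This comes from the Kullback entropy on $\{R_i^\eps\le\tilde r R_i\}$ and from $H_i$ on the complement, together with a bootstrap on $\{\eps^2\EE_1^\eps\le1\}$ to linearize the fractional powers.

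\textbf{What does work.} Your final step for the volume fractions is correct and simpler than the paper's Lemma~\ref{lemma_alpha}. The identity $\rho_i(\alpha_i^\eps-\alpha_i)=(R_i^\eps-R_i)-\alpha_i^\eps(\rho_i^\eps-\rho_i)$, combined with $0\le\alpha_i^\eps\le1$ and \eqref{eqB}, directly gives $\|\alpha_i^\eps-\alpha_i\|_{L^2}\lesssim\|R_i^\eps-R_i\|_{L^2}+\eps\sqrt{\EE_1^\eps}$.
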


\begin{remark}
It will be shown subsequently that the assumption $\int R_{\pm,0}^\eps,\EE_1^\eps(0)$ bounded and $\EE_{2}^\eps(0) \to 0$ yields the convergences $\| R_{\pm,0}^\eps - R_{\pm,0} \|_{L^2(\Omega)} \to 0$ and $\| \alpha_{\pm,0}^\eps - \alpha_{\pm,0} \|_{L^2(\Omega)} \to 0$, see Proposition~\ref{prop.estRL2} and \eqref{2ndterm}.
\end{remark}

\begin{remark}
The strong convergences in $L^\infty(0,T; L^p(\Omega))$ may be quantified by explicit estimates in terms of $\eps$ and $\EE_1^\eps(0)$. For the sake of readability, we do not record these bounds in the statement of the theorem, and refer instead to Proposition~\ref{prop.rhoE1} together with the boundedness of $\EE_1^\eps$ proved in Section~\ref{sect4.1}. In the case where $\gamma_i=2$ then the convergence holds for $p\leq \gamma_i$.
\end{remark}

\begin{remark}
It should be possible to extend this analysis to periodic domains or to two-dimensional settings.
\end{remark}

\subsection{Survey of previous results}\label{survey}

Compressible two-fluid models have been extensively studied in the literature. From a mathematical perspective, we refer for instance to the survey chapter by Bresch, Desjardins, Ghidaglia, Grenier and Hillairet \cite{bresch2018multi} and the references therein. We briefly review below some results that are most relevant to the present work.

In the viscous setting, the analysis of such systems has led to important developments in the theory of compressible PDEs. In particular, the existence of finite-energy weak solutions was established by Bresch, Mucha, and Zatorska \cite{bresch2019finite} as well as by Vasseur, Wen, and Yu \cite{vasseur2019global}, highlighting suitable dissipation mechanisms. In the same spirit, Novotný and Pokorný \cite{novotny2020weak} and Novotný \cite{novotny2020weak1} developed a weak solution theory of compressible bi-fluid models, while Jin, Kwon, Nečasová, and Novotný \cite{jin2021existence} introduced dissipative turbulent solutions and investigated their stability. Weak-strong uniqueness properties were further analyzed in \cite{jin2019weak,li2026weak,li2021remarks}. 
These contributions, among others, as e.g. \cite{li2025non,li2020large,piasecki2022maximal}, form a coherent and evolving theory for compressible two-fluid flows.

For single-phase flows, the low Mach number limit is by now well understood. For the compressible Euler and Navier–Stokes equations, convergence toward incompressible dynamics under well-prepared initial data was first established by Klainerman and Majda \cite{klainerman1981singular, klainerman1982compressible}. This analysis was later extended by Schochet \cite{schochet1994fast}, who developed a general framework to handle fast oscillations in hyperbolic systems.

In the isentropic setting, numerous results are available. In the framework of weak solutions, the low Mach number limit for the compressible Navier–Stokes system was established in \cite{lions1998incompressible}, \cite{desjardins1999incompressible}, \cite{desjardins1999low}, and \cite{bresch2002low}. For strong solutions, we refer to \cite{grenier1997oscillatory} and \cite{danchin2002zero}.

In the non-isentropic case, the analysis is more involved due to the coupling with the temperature (or energy) equation. The low Mach number limit has been justified in the weak solution framework in \cite{feireisl2007low} and \cite{feireisl2013inviscid} (see also the books \cite{lions1998mathematical} and \cite{feireisl2004dynamics}). For strong solutions, results were obtained for instance in \cite{metivier2001incompressible}, \cite{alazard2006low}, and \cite{alazard2005incompressible}.

A concise overview of these developments can be found in \cite{gallagher2005resultats}.

Let us also mention that the incompressible limit for the classical isentropic compressible Navier–Stokes system may be obtained in other asymptotic regimes. In particular, the large viscosity limit leading to the incompressible equations was studied by Danchin and Mucha \cite{danchin2013incompressible}. We may also refer to Fanelli et al~\cite{fanelli2025incompressible} for related incompressible limits in a reduced compressible MHD system.

A comprehensive treatment of the compressible Navier–Stokes system with constant viscosities, with particular emphasis on the relative entropy method as a stability and convergence tool, is given in \cite{feireisl2009singular}. For more complex systems, such as compressible Navier–Stokes equations with density-dependent viscosities, the notion of relative entropy had to be revisited and adapted; see, for instance, \cite{bresch2017relative} and \cite{bresch2019navier}. We also mention works developing the relative entropy framework in related contexts, such as \cite{feireisl2012relative} and \cite{sueur2014inviscid}.

In contrast, significantly fewer results are available for multiphase flow models, even in simplified configurations. Only recently has attention been devoted to the formal derivation of low Mach number limits for various two-phase systems, including models with algebraic or PDE closures, as well as formulations involving one or two velocities. In these works, asymptotic expansions are typically performed as the Mach number tends to zero; see, for instance, \cite{varsakelis2011low} and \cite{lebot2025low}. Numerical investigations of such regimes have also been carried out, see \cite{battisti2025linearly} and the references therein.\newline

In the same week that we posted the first version of this paper on ArXiv, an independent work~\cite{li2026weak} also appeared, dealing with the bi-fluid compressible Navier--Stokes equations. In that paper, the authors investigate weak--strong uniqueness through the introduction of a different relative entropy. Although their functional contains the same natural term $\mathcal{E}_1^\varepsilon$, the additional contribution is simply the $L^2$-distance between the volume fractions $\alpha_i^\varepsilon - \alpha_i$. This choice leads to an entropy of a completely different nature from our functional $\mathcal{E}_2^\varepsilon$ defined in~\eqref{def-E2}, which is built solely on the partial densities $R_i^\varepsilon$. Let us emphasize that the partial densities are the natural variables for the bi-fluid compressible Navier--Stokes system with algebraic closure~\eqref{system_eps}, as highlighted in the majority of the literature (see, among others, \cite{bresch2019finite,novotny2020weak}). 

The month after our initial submission, another related paper appeared on ArXiv: in~\cite{li2026lowmachnumberlimit}, the authors reused the entropy introduced in~\cite{li2026weak} to obtain a low-Mach number limit under very well-prepared initial data. Besides assuming that $\rho_i^\varepsilon(0)$ is close to the constant $C_0^{1/\gamma_i}$---a condition encoded by the uniform bound of $\varepsilon^{-2} H_i(\rho_{i,0}^\varepsilon)$, which is also present in our assumptions and is standard in low-Mach analyses---they further require that $\alpha_i^\varepsilon(0)$ is close to the constant $\overline{\alpha_i} = 1/C_0^{\gamma_i}$ in high Sobolev norms. The additional constraint $\overline{\alpha_+} + \overline{\alpha_-} = 1$ imposes a strong restriction on the admissible choices of $C_0$. In this setting, the authors naturally recover in the limit the homogeneous incompressible Navier--Stokes equations, characterized by the constant density $\rho = \overline{\alpha_+} \rho_+ + \overline{\alpha_-} \rho_- = 2$.
Although studying this particular regime---where the volume fractions remain close to constants---is an interesting and meaningful first step (and is included in our approach), the primary purpose of our entropy functionals is precisely to address genuinely non-trivial mixtures. Our framework handles the full complexity of the bi-fluid model, in which the coupling between $\alpha_i$ and $R_i$ is highly non-linear and non-trivial.

For completeness, we should also emphasize that the analysis in~\cite{li2026lowmachnumberlimit} is entirely independent of ours. Instead of working with global weak solutions of the compressible system, the authors consider local strong solutions on a time interval $T_\varepsilon$ that is smaller than $T$, the existence time of the solution to the limit system. Their analysis should be interpreted as a perturbative analysis around constant states $(\rho_i, \overline{\alpha_i})$, allowing in particular $\gamma_\pm>1$.

\subsection{Strategy of the proof}

The proof is based on the relative entropy method. More precisely, we consider the modulated quantities introduced above, which measure the deviation between a solution of the compressible system and a strong solution of the limit system, and combine them into a relative entropy functional. The main argument then consists in deriving a stability inequality for this functional and applying a Gronwall-type argument to conclude the convergence as $\eps \to 0$.

The strategy is inspired by the approach of Lions and Masmoudi \cite{lions1998incompressible}. However, the two-phase structure of the system introduces additional difficulties. In particular, the standard relative entropy does not provide sufficient control to close the argument, as it fails to yield bounds on the $L^2$-distance between $R_\pm^\eps$ and $R_\pm$. To overcome this issue, we introduce a logarithmic relative entropy, which provides the required control on the densities and their deviations from the limit state. Combined with the modulated entropy, which encodes the natural stability structure of the system, this yields a suitable framework for the analysis. 

With these tools in hand, we proceed in three main steps.

\textit{Step 1: Uniform bounds via a first Gronwall argument}. We first establish uniform bounds for $\EE_1^\eps$ and $\EE_{2}^\eps$. The key observation is that, while each functional alone does not yield sufficient control, their sum does. By deriving a differential inequality for $\EE_1^\eps + \EE_{2}^\eps$ and applying a first Gronwall argument, we obtain uniform bounds on both $\EE_1^\eps$ and $\EE_{2}^\eps$ individually.

\textit{Step 2: Strong convergence of densities and weak convergence of the divergence}. The uniform bound on $\EE_1^\eps$ yields the strong convergence of the densities $\rho_\pm^\eps$. In addition, combining the control of $\EE_1^\eps + \EE_{2}^\eps$ with compactness arguments allows us to deduce the weak convergence of $\div u^\eps$ to $0$.

\textit{Step 3: Strong convergence of the velocity and conclusion}. Finally, a second Gronwall argument ensures the strong convergence of the velocity field $u^\eps$ as well as that of the partial densities. Next, the convergence of the volume fractions relies crucially on the specific structure of the bi-fluid system. 

This closes the argument, as the convergence of all relevant quantities is then established, and the limit system is identified.

\bigskip

This paper is organized as follows. Section~\ref{recall} is devoted to the mathematical setting of the problem and recalls several preliminary results. In Section~\ref{relative_entropy}, we establish some inequalities for the relative entropies.
Next, in Section~\ref{convR}, we find some useful uniform bounds for the relative entropies using a first Gronwall estimate. This allows us to establish the strong convergence of the densities and the weak divergence of $\div u^\eps$. Finally, Section~\ref{End} completes the proof of our main theorem by proving the strong convergences of the velocity field and the volume fractions. Appendix~\ref{app1} provides entropy computations for more general velocity fields $u$, which may be useful for analyzing stability or establishing weak–strong uniqueness.

\section{Preliminaries and mathematical setting}\label{recall}

\subsection{Bi-fluid compressible system and nondimensionalization}\label{nondim}

In the present paper, we analyse a bi-fluid isentropic compressible Navier-Stokes system for barotropic pressures. In this subsection, we present this system and its nondimensionalization. Denoting the phases by $+$ and $-$, we are considering the following system.
\begin{equation}
 \left\{
 \begin{aligned}\label{system}
 &\partial_t (\alpha_+ \rho_+) + \div(\alpha_+\rho_+u) = 0,\\
 &\partial_t (\alpha_- \rho_-) + \div(\alpha_-\rho_-u) = 0,\\
 &\partial_t ((\alpha_+ \rho_+ + \alpha_- \rho_-)u) + \div((\alpha_+\rho_+ + \alpha_-\rho_-)u\otimes u) +\nabla (\alpha_+p_+(\rho_+)+\alpha_-p_-(\rho_-)) \\
 &\quad = \mu \Delta u + (\mu + \lambda)\nabla \div u,\\
 &p_i(\rho_i) = (\rho_i)^{\gamma_i}, \qquad \rho_i \geq 0, \qquad i=+,-,\\
 &p_+(\rho_+) = p_-(\rho_-), \\
 &\alpha_+ + \alpha_- = 1,\qquad 0\leq \ap \leq 1,
 \end{aligned}
 \right .
\end{equation}
where $\rho_i$, $p_i$, $\alpha_i$ the density, pressure and volume fraction of phase $i$. 
Let us notice that, thanks to the algebraic closure, the pressure term in the momentum equation can be simplified:
\[
\nabla (\alpha_+p_+(\rho_+)+\alpha_-p_-(\rho_-)) = \nabla (\alpha_+p_+(\rho_+)+\alpha_-p_+(\rho_+))=\nabla p_+(\rho_+).
\]

In this system, we restrict ourselves to the case of a common velocity field, denoted by $u$. Physically, this corresponds to a regime of rapid mechanical relaxation between the phases, where pressure equilibrium is quickly achieved. Mathematically, such models can be rigorously derived as asymptotic limits of more general two-velocities, two-pressures systems, using homogenization (see, e.g., Bresch et al. \cite{bresch2023mathematical}) or pressure-relaxation arguments (see \cite{burtea2023pressure}, \cite{burtea2026relaxation}). This assumption considerably simplifies the structure of the equations and is standard in the literature, see for instance \cite{novotny2020weak}. Allowing for two distinct velocities would lead to a substantially more intricate coupling mechanism and raises major analytical difficulties; the rigorous study of such models remains a challenging and largely open problem.

The system is endowed with the Dirichlet boundary condition \eqref{eq.bc} for $u(t,x)$ on $(0,T) \times \partial \Omega$.

The incompressible limit is the limit where the speed of the fluid becomes negligible compared to the speed of sound. Thus we reset the flow speed to the scale by replacing $u$ with $\eps u^\eps$. A particle will move a distance of order $1/\eps$ during a time of order $1/\eps^2$, meaning that the relevant time scale is $1/\eps^2$. This is why we can use a change of variable like $t^\eps=\eps^2 t$ and $x^\eps = \eps x$, with $t^\eps$ and $x^\eps$ of order $1$ (see \cite{danchin2005low}). Thus, we scale $u$ and $\rho$ in the following way
\begin{equation*}
 u(t,x) = \eps u^\eps(t^\eps,x^\eps), \quad \rho(t,x) = \rho^\eps(t^\eps,x^\eps).
\end{equation*}
Observe that, under this change of variables, the viscosity coefficients do not require rescaling.

Therefore, System~\eqref{system} can be rewritten as System~\eqref{system_eps}.

\subsection{Definition of a weak solution of the compressible bi-fluid system}

We consider weak solutions to System~\eqref{system_eps} in the following sense.

In this definition, we always set $\alpha_-^\eps = 1-\alpha_+^\eps$.

\begin{definition}\label{defp}
Let us consider some initial data $(\rho_{+,0}^\eps,R_{+,0}^\eps, m_0^\epsilon)\in L^{\gamma_+}(\Omega)\times L^{\gamma_+}(\Omega)\times L^{1}(\Omega)$ for $\gamma_\pm \geq 2$ satisfying the compatibility conditions \eqref{compatibilityeps}, where we defined $\alpha_{\pm,0}^\eps\in [0,1]$, $\rho_{-,0}^\eps\geq 0$, $\rho^\eps_0\geq 0$ and $u_0^\eps$ from $(\rho_{+,0}^\eps,R_{+,0}^\eps, m_0^\epsilon)$ (see Remark~\ref{rem.compa}). Let $T>0$. We say that $(\rho_\pm^\eps,\alpha_\pm^\eps,u^\eps)$ is a finite energy weak solution of the compressible two-phase Navier-Stokes System~\eqref{system_eps} with algebraic closure on $(0,T)$ associated with the initial data $(\rho_{\pm,0}^\eps, \alpha_{\pm,0}^\eps, u_0^\eps)$ if we have
 \begin{itemize}
 \item the regularity:
 \begin{align*}
 &0 \leq \alpha_+^\eps \leq 1, \quad \ap^\eps + \am^\eps = 1, \quad \rho_\pm^\eps \geq 0, \quad \rho_+^{\gamma_+}=\rho_-^{\gamma_-} \text{ a.e. in } (0,T)\times \Omega,\\
 &\rho_\pm^\eps ,\ \alpha_\pm^\eps \rho_\pm^\eps\in C_{\text{weak}}([0,T]; L^{\gamma_\pm}(\Omega))\cap L^\infty(0,T; L^{\gamma_\pm}(\Omega)),\\
 &\sqrt{\rho^\eps} u^\eps \in L^\infty(0,T; L^{2}(\Omega)), \quad
 \rho^\eps u^\eps \in C_{\text{weak}}([0,T]; L^{\frac{2 \gamma_+}{\gamma_++1}}+L^{\frac{2 \gamma_-}{\gamma_-+1}}(\Omega)),\\
 & u^\eps\in L^2(0,T;H^1_0(\Omega)),
 \end{align*}
 
 \item the continuity equation:
 \begin{align*}
 \int_\Omega (\alpha_i^\eps \rho_i^\eps \phi)(t,x) \dd x -\int_\Omega (\alpha_{i,0}^\eps \rho_{i,0}^\eps)(x) \phi(0,x) \dd x = \int_0^t\!\!\! \int_\Omega \Big(\alpha_i^\eps \rho_i^\eps(\dt \phi + u^\eps \cdot \nabla \phi)\Big)(s,x) \dd x \dd s,
 \end{align*}
 for a.e. $t\in (0,T)$, for any $\phi \in \mathcal{C}^\infty ([0,T]\times \overline{\Omega})$ and $i=+,-$,
 \item the momentum equation:
 \begin{align*}
 \int_\Omega (\rho^\eps u^\eps)(t,x)\cdot \phi(t,x) \dd x &- \int_\Omega (\rho^\eps_0 u^\eps_0(x) \cdot \phi(0,x) \dd x \\
 =& \int_0^t\!\!\! \int_\Omega \Big( \rho^\eps u^\eps \cdot \dt \phi + \rho^\eps (u^\eps \otimes u^\eps) : \nabla \phi+ \frac{1}{\eps^2} (\rho_+^{\epsilon})^{\gamma_+} \div \phi \Big)(s,x) \dd x \dd s \\
 & -\int_0^t\!\!\! \int_\Omega \left(\mu \nabla u^\eps : \nabla \phi + (\mu + \lambda) \div u^\eps \div \phi \right)(s,x) \dd x \dd s 
 \end{align*}
 for a.e. $t\in (0,T)$, for any $\phi \in \mathcal{C}_c^\infty ([0,T]\times \Omega;\R^3)$,
 \item the energy inequality:
 \begin{equation}\label{energy_estimate}
 E^\eps(t) +\mu \int_0^t\!\!\! \int_{\Omega}\left( \nabla u^\eps \right)^2 \dd x \dd s +\left(\lambda + \mu\right) \int_0^t\!\!\! \int_{\Omega} (\div u^\eps )^2 \dd x \dd s \leqslant E^\eps(0)
 \end{equation}
 for a.e. $t \in(0, T)$.
 \end{itemize}
\end{definition}

\begin{remark}
The above inequality follows from the classical energy inequality (see, e.g., \cite{novotny2020weak}) and the special structure of the functional $H$, see the next section.
\end{remark}

\subsection{Energy conservation}\label{conservation-energy}

In this section, we provide some details of the formal computation leading to the energy conservation for completeness. We first refer to \cite{lebot2025low} for the terms whose contribution to the energy balance has already been derived
\begin{equation*}
 \begin{aligned}
 &\frac{1}{2} \int_{\Omega} \Big(\rho^\varepsilon |u^\eps|^2\Big)(t,x) \dd x + \int_{\Omega} \frac{\alpha_+^{\epsilon}(t,x)}{\epsilon^2} \frac{\rho_+^{\eps \gp}}{\gp-1}(t,x)\dd x + \int_{\Omega} \frac{\alpha_-^{\epsilon}(t,x)}{\epsilon^2} \frac{\rho_-^{\eps \gm}}{\gm-1}(t,x)\dd x \\
 &\quad \int_0^t \int_\Omega \mu | \nabla u |^2(t,x) + (\mu+\lambda) |\div u |^2 (t,x)\dd x\\
 &\quad \leq \frac{1}{2} \int_{\Omega} \Big(\rho^\varepsilon |u^\eps|^2\Big)(0,x) \dd x + \int_{\Omega} \frac{\alpha_+^{\epsilon}(0,x)}{\epsilon^2} \frac{\rho_+^{\eps \gp}}{\gp-1}(0,x)\dd x + \int_{\Omega} \frac{\alpha_-^{\epsilon}(0,x)}{\epsilon^2} \frac{\rho_-^{\eps \gm}}{\gm-1}(0,x)\dd x.
 \end{aligned}
 \end{equation*}

We now focus on the remaining terms and show that they do not contribute to the energy variation:
\begin{align*}
 &-\int_{\Omega} \frac{\alpha_+^{\epsilon}}{\epsilon^2(\gp-1)} \Big(\rp^{\gp}+\gp \rp^{\gp-1}(\rp^\eps - \rp)\Big) \dd x - \int_{\Omega} \frac{\alpha_-^{\epsilon}}{\epsilon^2(\gm-1)}\Big(\rmi^{\gm}+\gm \rmi^{\gm-1}(\rmi^\eps-\rmi)\Big) \dd x\\
 &= \frac{\rp^{\gp}}{\epsilon^2} \int_{\Omega} \alpha_+^{\epsilon} \dd x - \frac{\gp \rp^{\gp-1}}{\eps^2(\gp-1)} \int_\Omega \ap^\eps \rp^\eps \dd x + \frac{\rmi^{\gm}}{\epsilon^2} \int_{\Omega} \alpha_-^{\epsilon} \dd x - \frac{\gm \rmi^{\gm-1}}{\eps^2(\gm-1)} \int_\Omega \am^\eps \rmi^\eps \dd x\\
 &=\frac{\rp^{\gp}}{\eps^2}\int_\Omega 1 \dd x - \frac{\gp \rp^{\gp-1}}{\eps^2(\gp-1)} \int_\Omega R_+^\eps \dd x - \frac{\gm \rmi^{\gm-1}}{\eps^2(\gm-1)} \int_\Omega R_-^\eps \dd x,
\end{align*}
where we use the equality of pressures $\rp^{\gp}=\rmi^{\gm}=C_0$ and $\ap^\eps+\am^\eps=1$. We can conclude using the conservation of total mass due to the mass equation in System~\eqref{system_eps}:
\begin{equation}\label{eq.consmass}
\int_\Omega R_\pm^\varepsilon(t,x) \, \dd x = \int_\Omega R_{\pm,0}^\varepsilon(x) \, \dd x \qquad \text{for all } t\in (0,T).
\end{equation}

\subsection{Non-homogeneous incompressible system}\label{sec.incomp}

In the case of nonhomogeneous incompressible flows with constant viscosities, the existence of both weak and strong solutions has been investigated in several works. The existence of weak solutions was first established by Kazhikhov~\cite{kazhikov}. Concerning strong solutions, Ladyzhenskaya and Solonnikov~\cite{ladyzhenskaya1978unique} proved local existence, as well as global existence for small initial data (and global solvability in two space dimensions). In sufficiently smooth domains, Lemoine~\cite{lemoine1997non} obtained local strong regular solutions, which are global for small regular data. Okamoto~\cite{okamoto1984equation} proved existence and uniqueness of local weak solutions and obtained global solutions in three dimensions under smallness assumptions on the initial data. Simon~\cite{simon1990nonhomogeneous} established the existence of global-in-time weak solutions under suitable weak assumptions on the initial data, and proved global strong solvability provided the initial density is not too small (and sufficiently regular). The solvability of nonstationary problems for nonhomogeneous incompressible fluids, as well as convergence in the vanishing viscosity limit, was studied by Itoh~\cite{itoh1999solvability}. Choe and Kim~\cite{jun2003strong} established local existence and uniqueness of strong solutions in three dimensions, including situations where the initial density vanishes on an open subset of the domain. More recently, Danchin~\cite{danchin2012lagrangian} developed a well-posedness theory in critical Besov spaces and proved the existence of global-in-time unique solutions under suitable smallness assumptions on the initial data.

Among the vast existing literature, let us state a result of local existence of strong solutions of non-homogeneous viscous incompressible Navier-Stokes equations, coming from \cite{salvi1991equations}.

\begin{theorem}\label{sol-lim}
 Let $V = \{u \in H^1(\Omega) \text{ such that } \div u =0\text{ and }u\cdot n|_{\partial \Omega}=0\}$. Let the boundary $\partial \Omega$ be of class $C^\infty$.
 Let $(\rho_0, u_0) $ be some initial data such that $u_0 \in H^3(\Omega) \cap V$ in $\Omega$ and $\rho_0 \in C^\infty(\bar{\Omega})$, with $\overline{C} \geq \rho_0 \geq \underline{C}>0$ for some constants $\overline{C}, \underline{C}$.
 Then there exists $T>0$ and a unique solution $(\rho,u, \Pi)$ of System~\eqref{limitsys} that satisfy
 \begin{align*}
 &u \in L^\infty(0,T;V) \cap C(0,T;H^3(\Omega)\cap V), \qquad \dt u \in C(0,T;H^1(\Omega)\cap V) \cap L^2([0,T]\times \Omega),\\
 &\rho \in C^1([0,T]\times \Omega), \qquad \dt \rho \in L^\infty(0,T;H^{-1}(\Omega)),\\
 &\nabla \Pi \in L^\infty(0,T;L^3(\Omega))\cap L^2(0,T;H^1(\Omega)).
 \end{align*}
\end{theorem}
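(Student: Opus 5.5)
This result is quoted from \cite{salvi1991equations}, and the plan below reconstructs the standard argument; we do not reprove it independently in this paper. We set $\rho=\rp\ap+\rmi\am$, so that \eqref{limitsys} becomes the non-homogeneous incompressible Navier--Stokes system $\dt\rho+u\cdot\nabla\rho=0$, $\div u=0$, $\rho(\dt u+u\cdot\nabla u)+\nabla\Pi=\mu\Delta u$, with $u|_{\partial\Omega}=0$. The approach is an iterative linearization (or, equivalently, a Galerkin scheme in the eigenbasis of the Stokes operator) combined with a priori estimates in increasing norms, followed by a compactness and uniqueness argument.

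\textbf{Step 1: linearization.} Given a velocity $v$ in the class $L^\infty(0,T;H^3\cap V)$ with $\dt v\in L^2(0,T;H^1)$, we solve the transport equation $\dt\rho+v\cdot\nabla\rho=0$ by characteristics. Since $v\in L^1(0,T;W^{1,\infty})$, the flow map is a $C^1$ diffeomorphism preserving $\Omega$, because $v\cdot n=0$ on the boundary. This gives $\underline C\le\rho\le\overline C$ for all times, and $\rho$ inherits $C^1$ regularity from $\rho_0$ with norms controlled by $\exp(\int_0^t\|\nabla v\|_{W^{k,\infty}})$.

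\textbf{Step 2: estimates for the linear momentum equation.} We solve the linear problem $\rho\dt u+\rho v\cdot\nabla u+\nabla\Pi=\mu\Delta u$ with $\div u=0$, and derive the following a priori bounds.
\begin{itemize}
\item The $L^2$ energy bound follows by testing with $u$ and using the transport equation.
\item An $H^1$ bound follows by testing with $\dt u$.
\item An $H^2$ bound, together with $\nabla\Pi\in L^2$, follows from the Cattabriga estimate for the Stokes system $-\mu\Delta u+\nabla\Pi=-\rho(\dt u+v\cdot\nabla u)$.
\item Differentiating the equation in time and testing with $\dt u$ bounds $\sqrt\rho\,\dt u$ in $L^\infty L^2$ and $\nabla\dt u$ in $L^2L^2$. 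The compatibility condition at $t=0$ is supplied by $u_0\in H^3\cap V$ through $\dt u(0)=\rho_0^{-1}(\mu\Delta u_0-\nabla\Pi_0-\rho_0u_0\cdot\nabla u_0)$, where $\Pi_0$ is determined by the elliptic problem $\div(\rho_0^{-1}\nabla\Pi_0)=\div(\rho_0^{-1}\mu\Delta u_0-u_0\cdot\nabla u_0)$.
\item Applying Stokes regularity once more gives $u\in L^\infty H^3$ and $\nabla\Pi\in L^2H^1\cap L^\infty L^3$.
\end{itemize}
All these bounds close on a short time $T$ depending only on $\|u_0\|_{H^3}$, $\underline C$, $\overline C$ and $\|\rho_0\|_{C^1}$. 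This shows that the map $v\mapsto u$ sends a ball of the space into itself.

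\textbf{Step 3: contraction, limit passage and uniqueness.} We show the map is a contraction in the weaker norm $L^\infty(0,T;L^2)\cap L^2(0,T;H^1)$ by taking differences of two iterates, estimating $\delta\rho$ in $L^\infty H^{-1}$ or $L^2$ via the transport equation, and applying Gronwall. Compactness (Aubin--Lions) then lets us pass to the limit, and time continuity into $H^3$ follows from the standard Lions--Magenes interpolation argument. Uniqueness follows from the same difference estimate applied to two strong solutions. The identification $\rho=\rp\ap+\rmi\am$ with $\ap$ transported, which also gives $\ap\in[\underline a,\overline a]$, then recovers \eqref{limitsys}.

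\textbf{Main obstacle.} The delicate step is the $\dt u$ estimate. It involves the term $\dt\rho\,\dt u=-\div(\rho v)\,\dt u$, which has to be integrated by parts and absorbed using $\underline C>0$. It also requires the compatibility data at $t=0$ from the fourth item of Step 2. We would handle both by working in the weighted norm $\|\sqrt\rho\,\dt u\|_{L^2}$, which is exactly where the lower bound $\rho_0\ge\underline C$ is used.
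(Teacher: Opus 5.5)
The paper does not prove this theorem; it quotes it from \cite{salvi1991equations} and adds the pressure regularity, so your reconstruction has to stand on its own. Your scheme is the standard one and is sound up to the first $\partial_t u$ estimate: characteristics for $\rho$, the linearized momentum equation, Stokes regularity, contraction in $L^\infty(0,T;L^2)\cap L^2(0,T;H^1)$, and uniqueness by the difference estimate.

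The gap is at the top regularity level. Testing the time-differentiated equation with $\partial_t u$ only gives $\sqrt{\rho}\,\partial_t u\in L^\infty(0,T;L^2)$ and $\nabla\partial_t u\in L^2(0,T;L^2)$. Feeding $f=-\rho(\partial_t u+v\cdot\nabla u)$ into the Stokes estimate then yields $u\in L^\infty(0,T;H^2)\cap L^2(0,T;H^3)$ and $\nabla\Pi\in L^\infty(0,T;L^2)\cap L^2(0,T;H^1)$. It does not yield $u\in L^\infty(0,T;H^3)$, nor $\nabla\Pi\in L^\infty(0,T;L^3)$, nor continuity into $H^3$. The Lions--Magenes argument you invoke for the last of these would need $u\in L^2(0,T;H^4)$ and $\partial_t u\in L^2(0,T;H^2)$, which you never estimate. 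To reach the stated class you need one more level, $\partial_t u\in L^\infty(0,T;H^1)$. You can get it by testing the differentiated equation with $\partial_t^2 u$, which also gives $\sqrt{\rho}\,\partial_t^2u\in L^2(0,T;L^2)$. Stokes regularity on the differentiated equation then gives $\partial_t u\in L^2(0,T;H^2)$. Hence $\partial_t u\in C([0,T];H^1)$, $u\in C([0,T];H^3)$ and $\nabla\Pi\in L^\infty(0,T;H^1)\subset L^\infty(0,T;L^3)$.

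That extra estimate is exactly where your claim that ``the compatibility condition at $t=0$ is supplied by $u_0\in H^3\cap V$'' fails.

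\begin{itemize}
\item \textbf{What it needs.} Its initial datum is $\|\nabla\partial_t u(0)\|_{L^2}$. The energy identity, or equivalently a uniform bound on the Galerkin data $\partial_t u^N(0)$, requires $\partial_t u(0)$ to vanish on $\partial\Omega$.
\item \textbf{The condition.} Since $u_0|_{\partial\Omega}=0$, this reads $(\mu\Delta u_0-\nabla\Pi_0)|_{\partial\Omega}=0$. Here $\Pi_0$ solves your elliptic problem with Neumann datum $\partial_n\Pi_0=\mu\Delta u_0\cdot n$. The normal component holds by construction, but the tangential one is a nonlocal constraint that $u_0\in H^3\cap V$ does not imply.
\item \textbf{Why it is necessary.} $\partial_t u(t)$ vanishes on $\partial\Omega$ for $t>0$, so $\partial_t u\in C([0,T];H^1)$ forces $\partial_t u(0)\in H^1_0$. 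Already for constant density and the linear Stokes flow $u(t)=e^{-tA}u_0$, with $A$ the Stokes operator, one has $Au(t)=e^{-tA}Au_0\in H^1_0$ for $t>0$ and $e^{-tA}Au_0\to Au_0$ in $L^2$. Hence $\|u(t)\|_{H^3}$ stays bounded as $t\to 0$ only if $Au_0\in H^1_0$.
\end{itemize}

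So Step~2 does not close as written. You must either add this first-order compatibility condition to the hypotheses, or settle for time-weighted higher regularity on $(0,T]$, which is weaker than what the statement asserts. Any version of the quoted result claiming this regularity up to $t=0$ must contain such a condition.
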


\begin{remark}\label{rkalpha}
When we consider $(\rho, u)$ a regular solution to non-homogeneous viscous incompressible Navier-Stokes equations, it is clear that we may construct a solution of \eqref{limitsys}. Indeed, solving $\rho=\rp \ap + \rmi \am$ and $1= \ap + \am$, we simply define
\[
\ap = \frac{\rho - \rmi}{\rp-\rmi} \quad\text{and}\quad \am = -\frac{\rho - \rp}{\rp-\rmi}
\]
which is well defined for $\gamma_+\neq \gamma_-$, as $C_0>0$. We note that $\alpha_\pm$ satisfy a transport equation. If the initial data verify $\alpha_{\pm,0} \in [\underline{a}, \overline{a}] \subset (0,1)$, then this property is preserved for all times. Likewise, since $\rho$ also satisfies a transport equation, it follows that $0<\underline{C} \leq \rho \leq \overline{C}$. Moreover, the constants $\underline{C}$ and $\overline{C}$ can be expressed in terms of $\underline{a}$, $\overline{a}$, $C_0$, and $\gamma_\pm$.
\end{remark}

\begin{remark}
The only difference compared to Theorem 3 of Salvi is that we additionally assume that $\nabla \Pi$ has the same regularity as $\Delta u$. We do not claim that this regularity is optimal for our analysis; however, we rely at some point on the fact that $\Delta u$ and $\nabla \Pi$ belong to $L^\infty(0,T;L^3(\Omega))\cap L^2(0,T;H^1(\Omega))$, as guaranteed by Theorem~\ref{sol-lim}.
\end{remark}

\section{Inequalities for the relative entropies}\label{relative_entropy}

In this section, we establish some inequalities on the relative entropies defined in \eqref{def-Energy}-\eqref{def-E2} which will allow us to prove Theorem~\ref{mainth}.

In the following, $C$ denotes a generic constant which may depend, depending on the context, on $\underline{C},\overline{C},\underline{a},\overline{a},C_0$ or $\gamma_\pm$.

\subsection{First part of the entropy}\label{computationsE1}

We begin by estimating the time derivative of $\EE^\eps_1$ for sufficiently smooth functions $(\rho_+,\rho_-,\alpha_+,u)$ without assuming that they solve System~\eqref{system_eps}.

\begin{proposition}\label{prop1} If $(\rho^{\epsilon}_+,\rho^{\epsilon}_-,\alpha^{\epsilon}_+,u^{\epsilon})$ is a weak solution of System~\eqref{system_eps}, defined as in Definition~\ref{defp}, then for all $(\rho_+,\rho_-,\alpha_+,u)$ functions verifying the regularity properties of the limit solution, see Theorem~\ref{sol-lim}, such that $p_+(\rho_+) =p_-(\rho_-) = C_0>0$ and $\div u=0$, we have
 \begin{align*}
 \EE_1^\eps(t) - \EE_1^\eps(0) \leq& - \mu \int_0^t\!\!\! \int_{\Omega} \left( \nabla (u^{\epsilon}-u) \right)^2 - (\lambda+\mu) \int_0^t\!\!\! \int_{\Omega} \left( \div (u^{\epsilon}-u) \right)^2 \dd x \dd s \\
 &-\mu \int_0^t\!\!\! \int_{\Omega} \nabla u : \nabla(u^\eps-u) \dd x\dd s \\
 &+ \int_0^t\!\!\! \int_{\Omega} \rho^\eps (\partial_t u + u \cdot \nabla u)\cdot (u-u^{\epsilon}) \dd x \dd s \\
 &+ \int_0^t\!\!\! \int_{\Omega} \rho^\eps (u^{\epsilon}-u)\cdot \nabla u \cdot (u-u^{\epsilon}) \dd x \dd s. 
 \end{align*}
\end{proposition}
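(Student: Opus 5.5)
This is the classical relative entropy computation of Lions–Masmoudi, adapted to the two-phase pressure. We expand
\[
\EE_1^\eps(t) = E^\eps(t) - \int_\Omega \rho^\eps u^\eps\cdot u\dd x + \frac12\int_\Omega \rho^\eps |u|^2\dd x ,
\]
since $\frac12\rho^\eps|u^\eps-u|^2=\frac12\rho^\eps|u^\eps|^2-\rho^\eps u^\eps\cdot u+\frac12\rho^\eps|u|^2$ and the potential parts of $E^\eps$ and $\EE_1^\eps$ coincide. Each of the three pieces is handled separately, and the results are summed.

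\textbf{Energy term.} For $E^\eps(t)-E^\eps(0)$ we use the energy inequality \eqref{energy_estimate}. By the computation of Section~\ref{conservation-energy}, the functional with $H_i(\rho_i^\eps|\rho_i)$ differs from the one with $(\rho_i^\eps)^{\gamma_i}/(\gamma_i-1)$ only by terms that are constant in time. This uses $\rho_+^{\gamma_+}=\rho_-^{\gamma_-}=C_0$ constant, $\ap^\eps+\am^\eps=1$ and the mass conservation \eqref{eq.consmass}. So this piece gives $-\mu\int\!\!\int|\nabla u^\eps|^2-(\lambda+\mu)\int\!\!\int(\div u^\eps)^2$.

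\textbf{Cross term.} For $-\int\rho^\eps u^\eps\cdot u$ we test the weak momentum equation with $\phi=u$. This requires a density argument: $u$ is not compactly supported, but it is regular, vanishes on $\partial\Omega$ and satisfies $u\in L^\infty H^3$, $\dt u\in L^2L^2$. With the integrability of $\rho^\eps u^\eps$ and $\rho^\eps u^\eps\otimes u^\eps$ from Definition~\ref{defp}, we approximate $u$ by $\mathcal C^\infty_c$ fields and pass to the limit. The pressure contribution $\eps^{-2}\int\!\!\int(\rho_+^\eps)^{\gamma_+}\div u$ vanishes because $\div u=0$. This is exactly where the singular $\eps^{-2}$ term disappears, so no modulated pressure estimate is needed. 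We obtain
\[
-\int_0^t\!\!\!\int_\Omega\big(\rho^\eps u^\eps\cdot\dt u+\rho^\eps u^\eps\otimes u^\eps:\nabla u\big)+\mu\int_0^t\!\!\!\int_\Omega\nabla u^\eps:\nabla u+(\mu+\lambda)\int_0^t\!\!\!\int_\Omega\div u^\eps\div u ,
\]
and the last integral is zero.

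\textbf{Quadratic term.} For $\frac12\int\rho^\eps|u|^2$ we test the sum of the two continuity equations (valid for $\rho^\eps=R_+^\eps+R_-^\eps$) with $\phi=\frac12|u|^2$. Here $\phi$ is only Lipschitz in $x$ with $\dt\phi=u\cdot\dt u\in L^2L^2$, so again a regularization is required. This yields $\int\!\!\int\rho^\eps(u\cdot\dt u+u^\eps\cdot\nabla u\cdot u)$.

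\textbf{Summation.} Adding the three pieces, the time-derivative terms combine into $\int\!\!\int\rho^\eps\dt u\cdot(u-u^\eps)$. For the convective terms we write
\[
-\rho^\eps u^\eps\otimes u^\eps:\nabla u+\rho^\eps (u^\eps\cdot\nabla u)\cdot u=\rho^\eps(u\cdot\nabla u)\cdot(u-u^\eps)+\rho^\eps\big((u^\eps-u)\cdot\nabla u\big)\cdot(u-u^\eps),
\]
which is checked by expanding $u^\eps=u+(u^\eps-u)$. For the viscous terms we use $-|\nabla u^\eps|^2+\nabla u^\eps:\nabla u=-|\nabla(u^\eps-u)|^2-\nabla u:\nabla(u^\eps-u)$. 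We also use $(\div u^\eps)^2=(\div(u^\eps-u))^2$ since $\div u=0$. Collecting everything gives exactly the claimed inequality.

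\textbf{Main obstacle.} The algebra is routine. The delicate step is justifying $u$ and $\frac12|u|^2$ as test functions: they are not compactly supported, and the momentum identity in Definition~\ref{defp} holds only for a.e.\ $t$. We would approximate $u$ in $L^2H^1_0\cap W^{1,1}L^2$ by smooth compactly supported divergence-free fields, or by any smooth compactly supported fields, keeping the $\div$ term. We then pass to the limit using $\rho^\eps u^\eps\otimes u^\eps\in L^\infty L^1$ and $(\rho_+^\eps)^{\gamma_+}\in L^\infty L^1$, together with $u\in L^\infty W^{1,\infty}$ from $H^3\subset W^{1,\infty}$. The resulting identities hold for a.e.\ $t$, which suffices.
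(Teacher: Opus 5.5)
Your proof is correct and is essentially the paper's argument. You split $\EE_1^\eps(t)-\EE_1^\eps(0)$ into the energy, the cross term and $\frac12\int\rho^\eps|u|^2$, handle them with the energy inequality \eqref{energy_estimate}, the momentum equation tested with $u$ (where $\div u=0$ removes the $\eps^{-2}$ pressure term) and the summed continuity equations tested with $\frac12|u|^2$, and then do the same regrouping of the convective and viscous terms.

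Your admissibility discussion goes beyond the paper, but as written it does not close. Compactly supported approximations of $u$ cannot converge in $W^{1,\infty}$, since $\partial_n u\neq 0$ on $\partial\Omega$ in general, and convergence in $L^2H^1_0$ does not control the pairings with $\rho^\eps u^\eps\otimes u^\eps$ and $(\rho_+^\eps)^{\gamma_+}$, which are only in $L^\infty L^1$. Use instead a uniformly Lipschitz, a.e.\ convergent cutoff sequence such as $\chi_n u$, and pass to the limit by dominated convergence.
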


\begin{proof}
We have
\begin{align*}
 \EE_1^\eps(t) - \EE_1^\eps(0) =& E^\eps(t) - E^\eps(0) \\
 &+\frac{1}{2} \int_{\Omega} \Big(\rho^\eps |u|^2\Big)(t,\cdot) - \frac{1}{2} \int_{\Omega} \Big(\rho^\eps |u|^2\Big)(0,\cdot) \\
 &- \int_{\Omega} \Big(\rho^\eps u^\eps \cdot u\Big)(t,\cdot) + \int_{\Omega} \Big(\rho^\eps u^\eps \cdot u\Big)(0,\cdot) .
\end{align*}

Let us consider mass equations with $\phi=\frac{|u|^2}{2}$ and add the two, we get:
\begin{align*}
 \frac{1}{2} \int_{\Omega} \Big(\rho^\eps |u|^2\Big)(t,\cdot) - \frac{1}{2} \int_{\Omega} \Big(\rho^\eps |u|^2\Big)(0,\cdot) 
 &=\frac{1}{2} \int_0^t\!\!\!\int_{\Omega} \rho^\eps \left(\partial_t|u|^2+u^\eps \cdot \nabla |u|^2\right) \nonumber \\
&= \int_0^t \!\!\!\int_\Omega \rho^\eps u\cdot \Big(\partial_t u + (u^{\epsilon} \cdot \nabla) u\Big).
\end{align*}
Let us notice that we need, here and below, to check that all terms are well defined by using the regularity properties of the functions depending on $\eps$ (see Definition~\ref{defp}) and the smooth functions (see Theorem~\ref{sol-lim}).

Let us now use the momentum equation by taking $\phi=u$, we get:
\begin{multline*}
 -\int_{\Omega} \Big(\rho^\eps u^\eps \cdot u\Big)(t,\cdot) + \int_{\Omega} \Big(\rho^\eps u^\eps \cdot u\Big)(0,\cdot) = -\int_0^t\!\!\!\int_{\Omega}\rho^\eps u^{\varepsilon} \cdot (\partial_t u + (u^{\varepsilon} \cdot \nabla) u )
 \\
 - \frac{1}{\epsilon^2}\int_0^t\!\!\! \int_{\Omega} \rho_{+}^{\varepsilon \gamma_+}\div u 
 +\mu \int_0^t\!\!\!\int_{\Omega} \nabla u^{\varepsilon} : \nabla u + (\mu+\lambda) \int_0^t\!\!\!\int_{\Omega} \left(\operatorname{div} u^{\varepsilon}\right)(\operatorname{div} u).
\end{multline*}

We can rewrite 
\begin{equation*}
\begin{aligned}
 \int_{\Omega} \rho^\eps (\partial_t u + (u^{\epsilon} \cdot \nabla) u)\cdot (u-u^{\epsilon})
 =& \int_{\Omega} \rho^\eps (\partial_t u + (u \cdot \nabla) u)\cdot (u-u^{\epsilon}) \\
 &+ \int_{\Omega}\rho^\eps \Big(((u^{\epsilon}-u)\cdot \nabla) u \Big)\cdot (u-u^{\epsilon}).
\end{aligned}
\end{equation*}

Therefore, using the energy estimate \eqref{energy_estimate} satisfied by $(\rho_+^{\epsilon}, \rho_-^{\epsilon}, \alpha_+^{\epsilon},u^{\epsilon}) $, we obtain:
\begin{align*}
 \EE_1^\eps(t) - \EE_1^\eps(0) \leq& -\mu \int_0^t \!\!\!\int_{\Omega}\left( \nabla u^\eps \right)^2 \dd x \dd t - \left(\lambda + \mu\right) \int_0^t \!\!\!\int_{\Omega} (\div u^\eps )^2 \dd x \dd t\\
 &+\int_0^t\!\!\! \int_{\Omega} \rho^\eps (\partial_t u + u \cdot \nabla u)\cdot (u-u^{\epsilon}) \\
 &+ \int_0^t \!\!\! \int_{\Omega} \rho^\eps (u^{\epsilon}-u)\cdot \nabla u \cdot (u-u^{\epsilon})\\
 & - \frac{1}{\epsilon^2}\int_0^t\!\!\! \int_{\Omega} \rho_{+}^{\varepsilon \gamma_+}\div u \\
 &+\mu \int_0^t \!\!\! \int_{\Omega} \nabla u^{\varepsilon} : \nabla u + (\mu+\lambda) \int_0^t \!\!\! \int_{\Omega} \left(\operatorname{div} u^{\varepsilon}\right)(\operatorname{div} u).
\end{align*}

By virtue of monotonicity of the operators $\nabla$ and $\div$, namely, we write
\begin{align*}
 - \nabla u^{\varepsilon} : \nabla (u^\eps-u) &= - \left(\nabla (u^\eps - u) \right)^2 - \nabla u : \nabla(u^\eps - u),
\end{align*}
and
\begin{align*}
 - \div u^\eps \div(u^\eps-u) &= - \left( \div(u^\eps - u) \right)^2 - \div u \div(u^\eps - u),
\end{align*}
which ends the proof because $u$ is divergence free.
\end{proof}

\begin{remark}
Appendix~\ref{app1} presents entropy computations for more general velocity fields $u$, which extend those considered here and could be used to study stability or weak-strong uniqueness.
\end{remark}

We now consider $(\rho_+,\rho_-,\alpha_+,u)$ as a solution of System~\eqref{limitsys}, verifying the regularity properties of the limit solution, see Theorem~\ref{sol-lim}. We can reformulate Proposition~\ref{prop1}.

\begin{proposition}\label{prop2}
If $(\rho^{\epsilon}_+,\rho^{\epsilon}_-,\alpha^{\epsilon}_+,u^{\epsilon})$ is a weak solution of System~\eqref{system_eps}, defined as in Definition~\ref{defp}, and if $(\rho_+,\rho_-,\alpha_+,u)$ is a strong solution of System~\eqref{limitsys}, then
\begin{align*}
\EE_1^\eps(t) - \EE_1^\eps(0) \leq& - \mu \int_0^t\!\!\! \int_{\Omega} \left( \nabla (u^{\epsilon}-u) \right)^2 - (\lambda+\mu) \int_0^t\!\!\! \int_{\Omega} \left( \div (u^{\epsilon}-u) \right)^2 \dd x \dd s \\
 &+ \mu \int_0^t\!\!\! \int_\Omega \frac{\rho^\eps - \rho}{\rho} \Delta u \cdot(u-u^\eps) 
 + \int_0^t\!\!\! \int_{\Omega} \rho^\eps (u-u^\eps) \cdot \nabla u \cdot (u^\eps-u) \\
 &+ \int_0^t\!\!\! \int_{\Omega} \frac{\rho-\rho^\eps}{\rho} \nabla \Pi\cdot (u-u^\eps)- \int_0^t\!\!\! \int_{\Omega} \nabla \Pi\cdot (u-u^\eps). 
\end{align*}
\end{proposition}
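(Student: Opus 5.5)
The plan is to start from Proposition~\ref{prop1}, which applies because a strong solution of \eqref{limitsys} satisfies $p_+(\rho_+)=p_-(\rho_-)=C_0$ and $\div u=0$. Only two terms of its right-hand side need rewriting: the viscous cross term $-\mu\int_0^t\!\int_\Omega \nabla u:\nabla(u^\eps-u)$ and the acceleration term $\int_0^t\!\int_\Omega \rho^\eps(\dt u+u\cdot\nabla u)\cdot(u-u^\eps)$. The dissipation terms and the quadratic term $\int\rho^\eps (u^\eps-u)\cdot\nabla u\cdot(u-u^\eps)$ are carried over unchanged; the latter is the same as $\int \rho^\eps (u-u^\eps)\cdot\nabla u\cdot(u^\eps-u)$ in the statement.

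\textbf{Viscous cross term.} Since $u^\eps-u\in H^1_0(\Omega)$ and $u\in H^3$, integrating by parts gives
\[
-\mu\int_\Omega \nabla u:\nabla(u^\eps-u)=\mu\int_\Omega \Delta u\cdot(u^\eps-u)=-\mu\int_\Omega \Delta u\cdot(u-u^\eps).
\]

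\textbf{Acceleration term.} Since $\rho\geq \underline{C}>0$ (Remark~\ref{rkalpha}), the momentum equation of \eqref{limitsys} can be solved as
\[
\dt u+u\cdot\nabla u=\frac{1}{\rho}\bigl(\mu\Delta u-\nabla\Pi\bigr).
\]
Multiplying by $\rho^\eps$ and writing $\frac{\rho^\eps}{\rho}=1+\frac{\rho^\eps-\rho}{\rho}$ gives
\[
\int_\Omega \rho^\eps(\dt u+u\cdot\nabla u)\cdot(u-u^\eps)
=\mu\int_\Omega \Bigl(1+\frac{\rho^\eps-\rho}{\rho}\Bigr)\Delta u\cdot(u-u^\eps)
+\int_\Omega \frac{\rho-\rho^\eps}{\rho}\nabla\Pi\cdot(u-u^\eps)
-\int_\Omega \nabla\Pi\cdot(u-u^\eps).
\]

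\textbf{Combination.} The term $\mu\int_\Omega \Delta u\cdot(u-u^\eps)$ from the acceleration term cancels exactly with $-\mu\int_\Omega \Delta u\cdot(u-u^\eps)$ from the viscous cross term. What remains is precisely the right-hand side of the statement. The term $-\int\nabla\Pi\cdot(u-u^\eps)$ is kept as is rather than integrated by parts, since $\div u^\eps\neq 0$.

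\textbf{Justification.} The only delicate point is checking that every integral is well defined. The relevant bounds are $\rho^\eps\in L^\infty(0,T;L^{\gamma_\pm})$ with $\gamma_\pm\geq 2$, $u^\eps\in L^2(0,T;H^1_0)\subset L^2(0,T;L^6)$, and $\Delta u,\nabla\Pi\in L^\infty(0,T;L^3)\cap L^2(0,T;H^1)$ from Theorem~\ref{sol-lim}. By Hölder's inequality with exponents $2,3,6$, and using $H^1\subset L^6$ where needed, the products $\rho^\eps\,\Delta u\cdot u^\eps$ and $\rho^\eps\,\nabla\Pi\cdot u^\eps$ are integrable. This is exactly the reason the extra regularity of $\nabla\Pi$ was assumed.
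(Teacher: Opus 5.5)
Your proposal is correct and follows essentially the same route as the paper. Starting from Proposition~\ref{prop1}, you substitute $\rho^\eps(\dt u+u\cdot\nabla u)=\frac{\rho^\eps}{\rho}(\mu\Delta u-\nabla\Pi)$ from the limit momentum equation, integrate the viscous cross term by parts using $u^\eps-u\in H^1_0$, and split $\frac{\rho^\eps}{\rho}=1+\frac{\rho^\eps-\rho}{\rho}$ so that the $\mu\Delta u\cdot(u-u^\eps)$ contributions cancel. The only differences are cosmetic: you integrate by parts before combining terms (the paper combines first), and you add an explicit integrability check that the paper leaves implicit.
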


\begin{proof}
Let us define the remainder $\RR$ as almost all terms on the right-hand side of Proposition~\ref{prop1}:
 \begin{align*}
 \RR^\eps := &-\mu \int_0^t\!\!\! \int_{\Omega} \nabla u : \nabla(u^\eps-u) \\
 &+ \int_0^t\!\!\! \int_{\Omega} \rho^\eps (\partial_t u + u \cdot \nabla u)\cdot (u-u^{\epsilon}) \\
 &+ \int_0^t\!\!\! \int_{\Omega} \rho^\eps (u^{\epsilon}-u)\cdot \nabla u \cdot (u-u^{\epsilon}).
 \end{align*}

We use the momentum equation verified by $(\rho_+,\rho_-,\alpha_+,u)$ to get:
\begin{multline*}
 \RR^\eps = \int_0^t\!\!\! \int_{\Omega} \frac{\rho^\eps}{\rho} \mu \Delta u (u-u^{\epsilon}) +\mu \int_0^t\!\!\! \int_{\Omega} \nabla u : \nabla(u-u^{\epsilon})\\
 -\int_0^t\!\!\! \int_{\Omega} \frac{\rho^\eps}{\rho} \nabla \Pi \cdot (u-u^\eps) + \int_0^t\!\!\! \int_{\Omega} \rho^\eps (u-u^\eps) \cdot \nabla u \cdot (u^\eps-u).
\end{multline*}

Finally, we end the proof by noticing that 
\begin{align*}
 \int_{\Omega} \mu \frac{\rho^\eps}{\rho} \Delta u (u-u^{\epsilon}) + \int_{\Omega} \mu \nabla u : \nabla(u-u^\eps) &= \mu \int_\Omega \frac{\rho^\eps}{\rho}\Delta u (u-u^\eps)- \mu \int_\Omega \Delta u (u-u^\eps) \\
 &= \mu \int_\Omega \frac{\rho^\eps - \rho}{\rho} \Delta u (u-u^\eps).
\end{align*}

\end{proof}

\subsection{Logarithmic entropies}

We now study the evolution of the entropy component \eqref{def-E2} for weak solutions of System~\eqref{system_eps}. 
We begin by estimating the time derivative of $\EE^\eps_2$ for sufficiently smooth functions $(\rho_+,\rho_-,\alpha_+,u)$ without assuming that they solve System~\eqref{limitsys}.

\begin{proposition}\label{prop1E2}
If $(\rho^{\epsilon}_+,\rho^{\epsilon}_-,\alpha^{\epsilon}_+,u^{\epsilon})$ is a weak solution of System~\eqref{system_eps}, defined as in Definition~\ref{defp}, then for all $(\rho_+,\rho_-,\alpha_+,u)$ functions verifying the regularity properties of the limit solution, see Theorem~\ref{sol-lim}, we have
\begin{align*}
 \int_\Omega R_i^\eps \ln\Big(\frac{R_i^\eps}{R_i} \Big) (t) - \int_\Omega R_i^\eps \ln\Big(\frac{R_i^\eps}{R_i} \Big)(0) =& \int_0^t\!\!\! \int_\Omega (R_i^\eps - R_i)(u - u^\eps) \nabla \ln R_i \\
 &+ \int_0^t\!\!\! \int_\Omega (R_i^\eps - R_i) \div (u-u^\eps)\\
 &- \int_0^t\!\!\! \int_\Omega \frac{R_i^\eps}{R_i} (\dt R_i + \div ( R_iu)),
\end{align*}
for a.a. $t\in (0,T)$ and $i=+,-$.
\end{proposition}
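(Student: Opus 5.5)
The identity should follow by splitting $\int_\Omega R_i^\eps\ln(R_i^\eps/R_i)$ into $\int_\Omega R_i^\eps\ln R_i^\eps-\int_\Omega R_i^\eps\ln R_i$ and treating each piece separately. For the second piece, use the weak continuity equation of Definition~\ref{defp} with the test function $\phi=\ln R_i$. This is admissible because $R_i=\alpha_i\rho_i$ with $\rho_i$ constant and $\alpha_i\in[\underline a,\overline a]\subset(0,1)$ (Remark~\ref{rkalpha}), so $R_i$ is bounded below and $\ln R_i$ is $C^1$ (by density if needed). This gives
\[
\int_\Omega R_i^\eps\ln R_i\,\Big|_0^t=\int_0^t\!\!\int_\Omega R_i^\eps\Big(\frac{\dt R_i}{R_i}+u^\eps\cdot\nabla\ln R_i\Big).
\]
For the first piece, I would show that $R_i^\eps$ is a renormalized solution of its continuity equation with $b(s)=s\ln s$. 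Since $s\,b'(s)-b(s)=s$, this yields
\[
\int_\Omega R_i^\eps\ln R_i^\eps\,\Big|_0^t=-\int_0^t\!\!\int_\Omega R_i^\eps\,\div u^\eps .
\]

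The remaining step is algebraic. Expand the last term of the claimed right-hand side as
\[
-\frac{R_i^\eps}{R_i}\bigl(\dt R_i+\div(R_iu)\bigr)=-R_i^\eps\frac{\dt R_i}{R_i}-R_i^\eps\div u-R_i^\eps\,u\cdot\nabla\ln R_i .
\]
Collecting the terms multiplied by $R_i^\eps$ in the three integrals, the terms containing $u$ cancel. What is left is exactly
\[
-R_i^\eps\,u^\eps\cdot\nabla\ln R_i-R_i^\eps\div u^\eps-R_i^\eps\frac{\dt R_i}{R_i},
\]
which is the difference of the two identities above. The terms multiplied by $R_i$ combine into
\[
-\int_0^t\!\!\int_\Omega\bigl[(u-u^\eps)\cdot\nabla R_i+R_i\div(u-u^\eps)\bigr]=-\int_0^t\!\!\int_\Omega\div\bigl(R_i(u-u^\eps)\bigr),
\]
which vanishes because $u^\eps\in H^1_0(\Omega)$ and $u=0$ on $\partial\Omega$. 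Finally, $-\int R_i^\eps+\int R_i$ is constant in time by \eqref{eq.consmass} and the transport of $R_i$. Hence the stated identity is equivalent to the statement with or without these linear terms.

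The main obstacle is justifying the renormalization for weak solutions. I would follow DiPerna--Lions: extend $R_i^\eps$ and $u^\eps$ by zero outside $\Omega$, which is legitimate since $u^\eps\in L^2(0,T;H^1_0(\Omega))$. Then mollify in space. The commutator $\div(R_i^\eps u^\eps)*\omega_\delta-\div((R_i^\eps*\omega_\delta)u^\eps)$ tends to $0$ in $L^1$ because $R_i^\eps\in L^\infty(0,T;L^{\gamma_i})\subset L^\infty(0,T;L^2)$ (as $\gamma_i\ge2$) and $\nabla u^\eps\in L^2$. Since $s\ln s$ is not $C^1$ at $0$, I would first renormalize with $b_\eta(s)=s\ln(s+\eta)$. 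Its derivative is locally bounded, and $s\,b_\eta'(s)-b_\eta(s)=s^2/(s+\eta)\to s$. Passing $\eta\to0$ by dominated convergence uses $R_i^\eps\in L^2$ and $|s\ln s|\le C(1+s^2)$.

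The equality only holds for a.e.\ $t$. At $t=0$ one should use the weak-in-time formulation, or Lebesgue points of $t\mapsto\int_\Omega R_i^\eps\ln R_i^\eps$, together with the initial datum. This is consistent with the statement being asserted for a.a.\ $t\in(0,T)$.
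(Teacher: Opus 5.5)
Your proposal is correct and follows essentially the paper's route: split $R_i^\eps\ln(R_i^\eps/R_i)$ into two pieces, handle $R_i^\eps\ln R_i^\eps$ by renormalization through convolution (the paper also appeals to Lions' regularization, which uses $R_i^\eps\in L^2$), test the continuity equation with $\ln R_i$, subtract, and absorb $\int R_i(u-u^\eps)\cdot\nabla\ln R_i$ by integration by parts using the boundary condition. Your justification is somewhat more careful than the paper's. Two small caveats: passing $\delta\to0$ in $b_\eta'(R_\delta)r_\delta$ still needs the usual truncation at large values, since $b_\eta'$ grows like $\ln s$; and your closing remark on the linear terms is superfluous, since those terms do not appear in this identity and $R_i$ is not assumed to satisfy the transport equation in this proposition.
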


\begin{proof}
We want to take $\phi = \ln R_\pm^\eps$ in the continuity equation of Definition~\ref{defp}. To justify the use of a possibly non-admissible test function, we appeal to the theory of renormalized solutions. Since 
\[
u^\varepsilon \in L^2(0,T; H^1(\Omega)),
\]
we regularize the continuity equation by convolution.
At the regularized level, the quantities are smooth enough to perform the computations rigorously. This procedures is classical, in particular, it is used by P.-L. Lions in the barotropic compressible Navier--Stokes system to obtain compactness of the density. It requires
\[
R^\eps_\pm \in L^2(0,T; L^2(\Omega)),
\]
if one wishes to avoid the truncation--regularization method of E. Feireisl.

We have for a.a. $t\in (0,T)$ and $i=+,-$
\begin{align*}
 \int_\Omega (R_i^\eps \ln R_i^\eps)(t,\cdot) -\int_\Omega R_{i,0}^\eps \ln R_{i,0}^\eps &= \int_0^t\!\!\! \int_\Omega R_i^\eps (\dt \ln R_i^\eps + u^\eps \cdot \nabla \ln R_i^\eps)\\
 &= \int_0^t\!\!\! \int_\Omega \dt R_i^\eps + u^\eps \cdot \nabla R_i^\eps \\
 &= - \int_0^t\!\!\! \int_\Omega R_i^\eps \div u^\eps 
\end{align*}
where we have used the conservation of the total mass, see \eqref{eq.consmass}.

Next, we use the mass equation of $R_i^\eps$ in the weak sense taking $\phi = \ln R_i$, we obtain
\begin{align*}
 \int_\Omega (R_i^\eps \ln R_i)(t,\cdot) -\int_\Omega R_{i,0}^\eps \ln R_{i,0} 
 &=\int_0^t\!\!\! \int_\Omega R_i^\eps (\dt \ln R_i + u^\eps \cdot \nabla \ln R_i)\\
 &= \int_0^t\!\!\! \int_\Omega \frac{R_i^\eps}{R_i} (\dt R_i + u^\eps \cdot \nabla R_i)\\
 &= \int_0^t\!\!\! \int_\Omega R_i^\eps (u^\eps - u) \nabla \ln R_i + \int_0^t\!\!\! \int_\Omega \frac{R_i^\eps}{R_i} (\dt R_i + u \cdot \nabla R_i).
\end{align*}

Subtracting these two equalities, we get
\begin{align*}
 \int_\Omega \Big(R_i^\eps \ln \frac{R_i^\eps}{R_i}\Big)(t,x) \dd x &-\int_\Omega R_{i,0}^\eps(x) \ln \frac{R_{i,0}^\eps(x)}{R_{i,0}(x)} \dd x\\
 =& \int_0^t\!\!\! \int_\Omega R_i^\eps (u - u^\eps) \nabla \ln R_i - \int_0^t\!\!\! \int_\Omega R_i^\eps \div u^\eps - \int_0^t\!\!\! \int_\Omega \frac{R_i^\eps}{R_i} (\dt R_i + u \cdot \nabla R_i) \\
 =& \int_0^t\!\!\! \int_\Omega (R_i^\eps - R_i)(u - u^\eps) \nabla \ln R_i 
 + \int_0^t\!\!\! \int_\Omega R_i (u-u^\eps) \nabla \ln R_i \\
 &+ \int_0^t\!\!\! \int_\Omega R_i^\eps \div (u-u^\eps)-\int_0^t\!\!\! \int_\Omega R_i^\eps \div u- \int_0^t\!\!\! \int_\Omega \frac{R_i^\eps}{R_i} (\dt R_i + u \cdot \nabla R_i) \\
 =& \int_0^t\!\!\! \int_\Omega (R_i^\eps - R_i)(u - u^\eps) \nabla \ln R_i 
 + \int_0^t\!\!\! \int_\Omega (R_i^\eps - R_i) \div (u-u^\eps)\\
 &- \int_0^t\!\!\! \int_\Omega \frac{R_i^\eps}{R_i} (\dt R_i + \div ( R_iu)).
\end{align*}

\end{proof}

We now consider $(\rho_+,\rho_-,\alpha_+,u)$ as a solution of System~\eqref{limitsys}, verifying the regularity properties of the limit solution, see Theorem~\ref{sol-lim}. We can reformulate Proposition~\ref{prop1E2}.

\begin{proposition}\label{proprelR}
If $(\rho^{\epsilon}_+,\rho^{\epsilon}_-,\alpha^{\epsilon}_+,u^{\epsilon})$ is a weak solution of System~\eqref{system_eps}, defined as in Definition~\ref{defp}, and if $(\rho_+,\rho_-,\alpha_+,u)$ is a solution of System~\eqref{limitsys}, verifying the regularity properties of the limit solution, see Theorem~\ref{sol-lim}, then there exits $C>0$ depending on $\|\nabla R_i\|_{L^3}$ such that
\begin{equation*}
 \EE_2^\eps (t) - \EE_2^\eps(0) \leq 
 C \int_0^t \sum_{i=+,-}\| R_i^\eps - R_i\|_{L^2(\Omega)} \| \nabla (u^\eps -u)\|_{L^2(\Omega)} ,
\end{equation*}
for a.a. $t\in (0,T)$.
\end{proposition}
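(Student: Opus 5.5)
The plan is to start from the identity of Proposition~\ref{prop1E2} for each phase $i=+,-$, specialize it to the limit solution, and then add the linear correction terms $-R_i^\eps+R_i$ that appear in $\EE_2^\eps$.

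\emph{Step 1: linear terms.} By the conservation of mass \eqref{eq.consmass}, $\int_\Omega R_i^\eps(t)=\int_\Omega R_{i,0}^\eps$. The limit system also conserves $\int_\Omega R_i$, since $R_i=\rho_i\alpha_i$ with $\rho_i$ constant and $\alpha_i$ transported by a divergence-free field satisfying $u\cdot n=0$. Hence $\int_\Omega(-R_i^\eps+R_i)$ is constant in time, and these terms contribute nothing to $\EE_2^\eps(t)-\EE_2^\eps(0)$.

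\emph{Step 2: last term of Proposition~\ref{prop1E2}.} Since $\rho_i$ is constant, $\alpha_i$ is transported, and $\div u=0$, we have $\dt R_i+\div(R_iu)=\rho_i(\dt\alpha_i+u\cdot\nabla\alpha_i)=0$. So the last term vanishes identically.

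\emph{Step 3: remaining two terms.} For the first term, Hölder's inequality with exponents $(2,6,3)$ gives
\[
\Big|\int_\Omega (R_i^\eps-R_i)(u-u^\eps)\cdot\nabla\ln R_i\Big|\le \|R_i^\eps-R_i\|_{L^2}\,\|u-u^\eps\|_{L^6}\,\|\nabla\ln R_i\|_{L^3}.
\]
We then apply the Sobolev–Poincaré inequality $\|u-u^\eps\|_{L^6}\le C\|\nabla(u-u^\eps)\|_{L^2}$, valid because $u-u^\eps\in H^1_0(\Omega)$ (here $u$ vanishes on $\partial\Omega$ by \eqref{eq.bc}). Next, $\nabla\ln R_i=\nabla R_i/R_i=\nabla\alpha_i/\alpha_i$, and $\alpha_i\ge\underline a>0$ by Remark~\ref{rkalpha}. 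Therefore $\|\nabla\ln R_i\|_{L^3}\le C\|\nabla R_i\|_{L^3}$, and this norm is bounded uniformly in time by the regularity in Theorem~\ref{sol-lim}, since $\rho\in C^1$ and $\alpha_i$ is affine in $\rho$. For the second term, Cauchy–Schwarz together with $|\div v|\le\sqrt3|\nabla v|$ gives
\[
\Big|\int_\Omega (R_i^\eps-R_i)\div(u-u^\eps)\Big|\le C\|R_i^\eps-R_i\|_{L^2}\|\nabla(u^\eps-u)\|_{L^2}.
\]
Summing over $i$ and integrating in time then yields the stated inequality.

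\emph{Expected obstacle.} The delicate point is rigor rather than computation. One must check that the identity of Proposition~\ref{prop1E2} holds, which needs the renormalization with $R_i^\eps\in L^2$; this is available since $\gamma_\pm\ge2$ and $R_i^\eps\le\rho_i^\eps\in L^\infty(L^{\gamma_i})$. One must also check that each integrand above is integrable, so that the Hölder pairing $L^2\times L^6\times L^3$ is legitimate a.e. in time.
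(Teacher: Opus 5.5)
Your proof is correct and follows the paper's argument. You specialize Proposition~\ref{prop1E2} to the limit solution so the transport term vanishes, bound the two remaining terms by H\"older ($L^2\times L^6\times L^3$) with the Sobolev--Poincar\'e inequality and by Cauchy--Schwarz, and dispose of the linear part of $\EE_2^\eps$ via conservation of $\int_\Omega R_i^\eps$ and $\int_\Omega R_i$. Your version is also slightly cleaner on two points: you state explicitly the lower bound $R_i\ge \rho_i\underline{a}$ that is needed to control $\nabla\ln R_i$ by $\nabla R_i$, and you get the correct first power of $\|\nabla(u-u^\eps)\|_{L^2}$ where the paper's intermediate display has a spurious square.
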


\begin{proof}
Considering $R_\pm$ a solution of System~\eqref{limitsys}, we can rewrite the expression obtained in Proposition~\ref{prop1E2}
\begin{align*}
 \int_\Omega \Big(R_i^\eps \ln \frac{R_i^\eps}{R_i}\Big)(t,x) \dd x &-\int_\Omega R_{i,0}^\eps(x) \ln \frac{R_{i,0}^\eps(x)}{R_{i,0}(x)} \dd x\\
 =& \int_0^t\!\!\! \int_\Omega (R_i^\eps - R_i)(u - u^\eps) \nabla \ln R_i 
 + \int_0^t\!\!\! \int_\Omega (R_i^\eps - R_i) \div (u-u^\eps).
\end{align*}

Using Hölder inequality for the first term we get 
\begin{align*}
 \int_\Omega \lvert (R_i^\eps - R_i)(u - u^\eps) \nabla \ln R_i \rvert &\leq \| R_i^\eps - R_i \|_{L^2(\Omega)} \lVert \nabla \ln R_i \rVert_{L^3(\Omega)} \lVert u-u^\eps \rVert_{L^6(\Omega)},
\end{align*}
hence, by Sobolev embedding, we obtain
\begin{align*}
 \int_\Omega \lvert (R_i^\eps - R_i)(u - u^\eps) \nabla \ln R_i \rvert &\leq C \| R_i^\eps - R_i \|_{L^2(\Omega)} \lVert \nabla ( u-u^\eps) \rVert_{L^2(\Omega)}^2.
\end{align*}

By Poincaré inequality, we also get for the second term
\[
\int_\Omega \lvert (R_i^\eps - R_i) \div (u^\eps - u) \rvert \leq C \| R_i^\eps - R_i \|_{L^2(\Omega)} \| \div(u-u^\eps)\|_{L^2(\Omega)}.
\]
We reach the desired conclusion by the conservation of the total mass for $\int R_i^\eps$ and $\int R_i$.
\end{proof}

\subsection{Control of the densities and the partial densities by the relative entropies}

We work under the hypotheses of the main theorem~\ref{mainth}, in particular, we will always denote by $(\rho^{\epsilon}_+,\rho^{\epsilon}_-,\alpha^{\epsilon}_+,u^{\epsilon})$ a weak solution of System~\eqref{system_eps} in the sense of Definition~\ref{defp} and $(\rho_+,\rho_-,\alpha_+,u)$ a solution of System~\eqref{limitsys}, verifying the regularity properties of Theorem~\ref{sol-lim}.

We first recall some known inequalities about the function $H$ given by \eqref{H} (see \cite{lions1998incompressible}, \cite{feireisl2012relative}, \cite{sueur2014inviscid}). Assume some $\gamma > 1$. For any compact $K \subset (0,+\infty)$ there exists two constants $c_1$ and $c_2$ such that for any $\rho \geq 0$ and for any $r \in K$,
\begin{align*}
 c_1 \left( \lvert \rho - r \rvert^2 1_{\lvert \rho - r\rvert <1} + \lvert \rho - r \rvert^\gamma 1_{\lvert \rho - r\rvert \geq 1} \right) \leq H(\rho | r),\\
 H(\rho | r) \leq c_2 \left( \lvert \rho - r \rvert^2 1_{\lvert \rho - r\rvert <1} + \lvert \rho - r \rvert^\gamma 1_{\lvert \rho - r\rvert \geq 1} \right).
\end{align*}

Choosing $K=[\min(\rho_-,\rho_+),\max(\rho_-,\rho_+)]$, we deduce from our assumption $\gamma_\pm\geq 2$ that, for $\rho_i=C_0^{1/\gamma_i}$ as in the limit system~\eqref{limitsys}, we have for a.e $(t,x)\in (0,T)\times \Omega$ and $i=+,-$
\begin{equation}\label{eqB}
 \alpha_i^\eps |\rho_i^\eps - \rho_i |^2+\alpha_i^\eps |\rho_i^\eps - \rho_i |^{\gamma_i}\leq C \alpha_i^\eps H_i( \rho_i^\eps \vert \rho_i ),
\end{equation}
where $C$ depends only on $C_0$ and $\gamma_\pm$.

We begin by establishing the following control of the densities by $\EE_1^\eps$.

\begin{proposition}\label{prop.rhoE1}
 If $(\int R_{\pm,0}^\eps)_\eps$ is bounded, then for all $i=+,-$ and $p<\gamma_i$, there exists $\beta_{1,i}(p), \beta_{2,i}(p)>0$ depending only on $p$ and $\gamma_\pm$, and $C_p>0$ independent of $\eps$ such that for all $t\geq 0$
 \[
 \| (\rho_i^\eps - \rho_i)(t,\cdot) \|_{L^p(\Omega)} \leq C_p \Big(\eps^{2} \EE_1^\eps(t) \Big)^{\beta_{1,i}(p)}+C_p\Big(\eps^{2} \EE_1^\eps(t) \Big)^{\beta_{2,i}(p)}.
 \]
\end{proposition}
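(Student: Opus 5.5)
The plan is to work pointwise in $(t,x)$, using two facts. First, $\alpha_+^\eps+\alpha_-^\eps=1$, so at each point at least one volume fraction is $\geq 1/2$. Second, the algebraic closure $(\rho_+^\eps)^{\gamma_+}=(\rho_-^\eps)^{\gamma_-}$, so each density is a function of the other. The point is that $\EE_1^\eps$ only controls $\alpha_i^\eps H_i(\rho_i^\eps|\rho_i)$, and this weight may degenerate where $\alpha_i^\eps$ is small. I will compensate by transferring information from the phase whose volume fraction is large to the other phase.

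\textbf{Step 1 (pointwise transfer between phases).} Let $f(s)=s^{\gamma_i/\gamma_j}$ for $\{i,j\}=\{+,-\}$, so that $\rho_j^\eps=f(\rho_i^\eps)$ and $\rho_j=f(\rho_i)$ since $\rho_\pm=C_0^{1/\gamma_\pm}$. I will show
\[
|\rho_j^\eps-\rho_j|\leq C\,|\rho_i^\eps-\rho_i| \quad\text{if } |\rho_i^\eps-\rho_i|<1,
\qquad
|\rho_j^\eps-\rho_j|^{\gamma_j}\leq C\,|\rho_i^\eps-\rho_i|^{\gamma_i}\quad\text{if } |\rho_i^\eps-\rho_i|\geq 1 .
\]
The first bound holds because $f$ is Lipschitz on the compact set $[\rho_i-1,\rho_i+1]\cap[0,\infty)$. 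The second uses $|\rho_j^\eps-\rho_j|^{\gamma_j}\leq C\big((\rho_j^\eps)^{\gamma_j}+1\big)=C\big((\rho_i^\eps)^{\gamma_i}+1\big)$ together with $\rho_i^\eps\leq\rho_i+|\rho_i^\eps-\rho_i|$ and $|\rho_i^\eps-\rho_i|\geq1$.

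Combining Step 1 with the lower bound on $H$ recalled before \eqref{eqB}, I obtain on the set $A_i=\{\alpha_i^\eps\geq 1/2\}$ that, for both $k=+,-$,
\[
|\rho_k^\eps-\rho_k|^2 1_{|\rho_k^\eps-\rho_k|<1}+|\rho_k^\eps-\rho_k|^{\gamma_k}1_{|\rho_k^\eps-\rho_k|\geq1}\leq C\,H_i(\rho_i^\eps|\rho_i)\leq 2C\,\alpha_i^\eps H_i(\rho_i^\eps|\rho_i).
\]
Some care is needed in the mixed cases. For example, $|\rho_i^\eps-\rho_i|\geq 1$ while $|\rho_j^\eps-\rho_j|<1$ gives $|\rho_j^\eps-\rho_j|^2\leq 1\leq |\rho_i^\eps-\rho_i|^{\gamma_i}$. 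The reverse case, $|\rho_i^\eps-\rho_i|<1$ while $|\rho_j^\eps-\rho_j|\geq1$, is handled by the Lipschitz bound: it gives $|\rho_j^\eps-\rho_j|\leq C$, hence $|\rho_j^\eps-\rho_j|^{\gamma_j}\leq C'|\rho_j^\eps-\rho_j|^2\leq C''|\rho_i^\eps-\rho_i|^2$. Since $\Omega=A_+\cup A_-$, summing over $i$ and integrating yields
\[
\int_{\{|\rho_k^\eps-\rho_k|<1\}}|\rho_k^\eps-\rho_k|^2+\int_{\{|\rho_k^\eps-\rho_k|\geq1\}}|\rho_k^\eps-\rho_k|^{\gamma_k}\leq C\,\eps^2\EE_1^\eps(t).
\]
I expect this case bookkeeping to be the main obstacle.

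\textbf{Step 2 (integration and Hölder).} Split $\int_\Omega|\rho_k^\eps-\rho_k|^p$ over the small set $\{|\rho_k^\eps-\rho_k|<1\}$ and the large set $\{|\rho_k^\eps-\rho_k|\geq1\}$, using that $\Omega$ is bounded.
\begin{itemize}
\item On the small set, for $p\leq2$, Hölder gives a bound $C(\eps^2\EE_1^\eps)^{p/2}$. For $p>2$, use $|\cdot|^p\leq|\cdot|^2$ there, which gives $C\eps^2\EE_1^\eps$.
\item On the large set, for $p<\gamma_k$, Hölder gives a bound $C(\eps^2\EE_1^\eps)^{p/\gamma_k}$.
\end{itemize}
Taking $p$-th roots yields the claim with $\beta_{1,k}(p)=\min(1/2,1/p)$ and $\beta_{2,k}(p)=1/\gamma_k$.

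The hypothesis that $\int_\Omega R_{\pm,0}^\eps$ is bounded is only needed to ensure, via \eqref{eq.consmass}, that the quantities involved are finite uniformly in $\eps$, for instance through the uniform $L^{\gamma_k}$ bound coming from the energy. The constants above do not otherwise depend on it. When $\gamma_k=2$, the large-set estimate directly allows $p=\gamma_k$.
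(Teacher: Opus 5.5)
Your proof is correct, and it takes a genuinely different route from the paper's.

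\textbf{The paper's route.} It assumes without loss of generality $\gamma_+>\gamma_-$ and uses only the combined weighted bound \eqref{eqB}. A one-directional pointwise transfer $|\rho_+^\eps-\rho_+|\le C|\rho_-^\eps-\rho_-|$ then gives an unweighted $L^2$ estimate for $\rho_+^\eps$. Next it invokes the bound on $\int R_{\pm,0}^\eps$ to obtain the non-small uniform bound \eqref{est.rhoLgamma} in $L^{\gamma_+}$, and interpolates to reach $p\in(2,\gamma_+)$. Finally it handles $\rho_-^\eps$ by splitting at $\{\rho_+^\eps\ge 2\rho_+\}$ and using the $L^{2\gamma}$ estimate already proved for $\rho_+^\eps$. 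That last step needs $2\gamma_+/\gamma_-<\gamma_+$, i.e. $\gamma_->2$.

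\textbf{Your route.} You keep the two regimes of the lower bound on $H$ separate and cover $\Omega$ by $\{\alpha_+^\eps\ge 1/2\}\cup\{\alpha_-^\eps\ge 1/2\}$. You then transfer both regimes in both directions through the closure. This yields directly, for both phases, a bound by $C\eps^2\EE_1^\eps$ on the quadratic part on the small set and on the $\gamma_k$-power part on the large set, and H\"older finishes.

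\textbf{What each buys.} Your argument is symmetric in $\pm$ and interpolation-free, with explicit exponents $\min(1/2,1/p)$ and $1/\gamma_k$. It is valid even for $p=\gamma_k$ for every $\gamma_k\ge2$, and it does not need the restriction $\gamma_->2$ implicit in the paper's treatment of $\rho_-^\eps$. It also never uses the hypothesis on $\int R_{\pm,0}^\eps$. You can simply say so rather than justify that hypothesis as in your last paragraph, since the estimate is trivial when $\EE_1^\eps(t)=\infty$. The paper's route produces the uniform $L^{\gamma_\pm}$ bound \eqref{est.rhoLgamma} as a by-product. Your large-set estimate gives that bound as well once $\eps^2\EE_1^\eps\le1$.

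\textbf{One justification to repair.} The map $f(s)=s^{\gamma_i/\gamma_j}$ is not Lipschitz on $[\rho_i-1,\rho_i+1]\cap[0,\infty)$ when $\gamma_i<\gamma_j$ and $\rho_i\le 1$, because that interval then contains $0$. What you actually use is the one-point bound $|f(s)-f(\rho_i)|\le C|s-\rho_i|$ on that interval. This does hold because $\rho_i>0$ is fixed: the difference quotient $(f(s)-f(\rho_i))/(s-\rho_i)$ extends continuously to the compact interval. Alternatively, split at $s=\rho_i/2$ as the paper does.
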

In the case where $\gamma_i=2$ then the convergence holds for $p\leq \gamma_i$.

\begin{proof}
As all the problem is symmetric in terms of $\gamma_+$, $\gamma_-$, we can assume without loss of generality that $\gamma_+>\gamma_-\geq 2$. Alternatively, we can exchange $\alpha_\pm, \rho_\pm$ by $\alpha_\mp, \rho_\mp$ in this subsection. Hence $\gamma = \gp/\gm >1$. The equality of pressures gives
\[
\rho_-^\eps = \rho_+^{\eps\gamma},
\]
so that we can write
\begin{align*}
\int_\Omega \alpha_+^\varepsilon \lvert \rho_+^\varepsilon - \rho_+ \rvert^2 
+ \int_\Omega \alpha_-^\varepsilon \lvert (\rho_+^\varepsilon)^\gamma - \rho_+^\gamma \rvert^2
=\int_\Omega \alpha_+^\varepsilon \lvert \rho_+^\varepsilon - \rho_+ \rvert^2 
+ \int_\Omega \alpha_-^\varepsilon \lvert \rho_-^\varepsilon - \rho_- \rvert^2 \leq C\eps^2 \EE_1^\eps(t).
\end{align*}
where we have used the inequality \eqref{eqB} and the definition of $\EE_1^\eps$ \eqref{def-E1}.

For all $x \in \Omega$ such that $\rho_+^\eps < \rho_+/2$, we have
$$|\rho_+^{\eps \gamma} -\rho_+^{ \gamma}|=\rho_+^{\gamma} -\rho_+^{\eps
\gamma}\geq \rho_+^{\gamma} - ( \rho_+/2)^\gamma 
\geq (1-2^{-\gamma})\frac{\rho_+^{\gamma}}{\rho_+} | \rho_+^{\eps} -\rho_+| ,$$
because $| \rho_+^{\eps} -\rho_+| = \rho_+ - \rho_+^{\eps} \leq \rho_+$

Otherwise, for all $x$ such that $\rho_+^\eps > \rho_+/2$, we have by the mean value theorem applied to $F(x) = x^{1/\gamma}$
$$|\rho_+^{\eps} -\rho_+| \leq \sup_{x>(\rho_+/2)^{\gamma}} F'(x) | \rho_+^{\eps\gamma} -\rho_+^\gamma| =\frac1\gamma (\rho_+/2) ^{1-\gamma} | \rho_+^{\eps\gamma} -\rho_+^\gamma|.$$

Hence, using $\alpha^\eps_+ +\alpha_-^\eps=1$, we obtain the bound
\begin{equation*}
\| \rho_+^\varepsilon - \rho_+ \|_{L^2(\Omega)}^2 = 
 \int_\Omega \alpha_+^\varepsilon \lvert \rho_+^\varepsilon - \rho_+ \rvert^2 
+ \int_\Omega \alpha_-^\varepsilon \lvert \rho_+^\varepsilon - \rho_+ \rvert^2\leq C\eps^2 \EE_1^\eps(t).
\end{equation*}
for some constant $C>0$. Such an inequality gives the desired result for $i=+$ and $p\leq 2$, where $\beta_1(p)=\beta_2(p)=1/2$.

To get the result for $p<\gamma_+$, we show that $\rp^\eps$ is uniformly bounded in $L^{\gp}(\Omega)$. Indeed, using the definition of $H_\pm$ \eqref{H}, the pressure equality gives
\begin{align*}
 \ap^\eps (\rp^\eps)^{\gp} &= (\gp-1) \ap^\eps H_+(\rp^\eps | \rp) -(\gp-1) \ap^\eps \rp^{\gp} + \gp \rp^{\gp-1} \ap^\eps \rp^\eps,\\
 \am^\eps (\rp^\eps)^{\gp} &= \am^\eps (\rmi^\eps)^{\gm} = (\gm-1) \am^\eps H_-(\rmi^\eps | \rmi) -(\gamma_--1) \am^\eps \rmi^{\gm} + \gm \rmi^{\gm-1} \am^\eps \rmi^\eps.
\end{align*}
Summing these inequalities and using that $\alpha_i^\eps \in [0,1]$, that $\rho_i$ are constants and that $\int R_i^\eps=\int R_{i,0}^\eps$ is bounded, we obtain after integration over $\Omega$
\begin{equation}\label{est.rhoLgamma}
\int_\Omega (\rmi^\eps)^{\gm} = \int_\Omega (\rp^\eps)^{\gp} \leq C \eps^2 \EE_1^\eps(t) + C. 
\end{equation}
Hence,
\[
\|\rp^\eps - \rp\|_{L^{\gamma_+}(\Omega)} \leq (C \eps^2 \EE_1^\eps(t) + C)^{1/\gamma_+} +C \leq C (\eps^2 \EE_1^\eps(t))^{1/\gamma_+} +C
\]
By interpolation, we get for any $p\in (2,\gamma_+)$
\[
\|\rp^\eps - \rp\|_{L^{p}(\Omega)} \leq (C\eps^2 \EE_1^\eps(t))^{a/2} \Big(C (\eps^2 \EE_1^\eps(t))^{1/\gamma_+} +C\Big)^{1-a} \leq C(\eps^2 \EE_1^\eps(t))^{\frac{a}2} + C(\eps^2 \EE_1^\eps(t))^{\frac{a}2+\frac{1-a}{\gamma_+} }
\]
where $\frac{a}2,\frac{a}2+\frac{1-a}{\gamma_+} \in (0,\frac{1}2]$ because $\gamma_+ \geq 2$. This ends the proof for $i=+$.

We are proving now that the same type of estimates holds for $\rho_-^\varepsilon$.
Let us remark that $\gamma = \gamma_+/\gamma_-$ satisfies $2<2\gamma<\gamma_+$ because we are considering the case in this proof $2<\gamma_-<\gamma_+$. By the previous inequality applied to $p=2\gamma$, we have proved that
\[
\int_{\{\rho_+^\varepsilon \ge 2\rho_+\}}
|\rho_+^\varepsilon-\rho_+|^{2\gamma} \leq C(\eps^2 \EE_1^\eps(t))^{\beta_1 2\gamma}+C(\eps^2 \EE_1^\eps(t))^{\beta_2 2\gamma} ,
\]
for some $C>0$ and $\beta_1,\beta_2\in (0,1/2)$.
Since $\frac{x}{x-\rho_+}\le C$ for all $x\ge 2\rho_+$, it follows that
\begin{align*}
 \int_{\{\rp^\eps \geq 2\rp\}} |\rp^\eps|^{2\gamma} \leq C^{2\gamma}\int_{\{\rp^\eps \geq 2\rp\}} |\rp^\eps - \rp |^{2\gamma} \leq C(\eps^2 \EE_1^\eps(t))^{\beta_1 2\gamma}+C(\eps^2 \EE_1^\eps(t))^{\beta_2 2\gamma}.
\end{align*}
Using the equality of pressures $\rho_-^\varepsilon=(\rho_+^\varepsilon)^\gamma$, we estimate
\begin{equation*}
 |\rmi^\eps - \rmi | = |(\rp^\eps)^{\gamma} - \rp^{\gamma}| \leq \left\{
 \begin{array}{lll}
 &\gamma(2\rp)^{\gamma-1} |\rp^\eps - \rp| &\mbox{ if } \rp^\eps \leq 2\rp\\
 & (\rp^\eps)^\gamma &\mbox{ if } \rp^\eps \geq 2 \rp
 \end{array}
\right.
\end{equation*}
Squaring, integrating over $\Omega$ and splitting the integral on $\{ \rp^\eps < 2\rp\}$ and $\{ \rp^\eps \geq 2\rp\}$, yields
\begin{equation*}
 \int_\Omega |\rmi^\eps - \rmi |^2 \leq C(\rp) \int_\Omega |\rp^\eps - \rp|^2 +\int_{\{\rp^\eps \geq 2\rp\}} |\rp^\eps|^{2\gamma}
 \leq C(\eps^2 \EE_1^\eps(t))^{\beta_3}+C(\eps^2 \EE_1^\eps(t))^{\beta_4}
\end{equation*}
for some $\beta_3,\beta_4>0$ depending only on $\gamma_\pm$. This gives the result for $i=-$ and $p\leq 2$. As for $i=+$, we deduce by interpolation and the estimate of $\|\rho_-^\eps\|_{L^{\gm}}$ \eqref{est.rhoLgamma} the result for $i=-$ and $p\in (2,\gamma_-)$ if $\gamma_->2$.
\end{proof}

The next result concerns the control of the $L^2$ norm of the partial densities by $\EE_1^\eps$ and $\EE_2^\eps$.

\begin{proposition}\label{prop.estRL2}
 If $(\int R_{\pm,0}^\eps)_\eps$ is bounded, then for all $i=+,-$ there exists $\beta_{1,i}, \beta_{2,i}>0$ depending only on $\gamma_\pm$, and $C>0$ independent of $\eps$ such that for all $t\geq 0$
 \[
 \| (R_i^\eps - R_i)(t,\cdot) \|_{L^2(\Omega)}^2 \leq C \Big(\eps^{2} \EE_1^\eps(t) \Big)^{\beta_{1,i}}+C\Big(\eps^{2} \EE_1^\eps(t) \Big)^{\beta_{2,i}} + C \EE_2^\eps (t).
 \]
\end{proposition}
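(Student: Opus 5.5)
We want to bound $\|R_i^\eps - R_i\|_{L^2}^2$ by $\EE_2^\eps$ plus powers of $\eps^2\EE_1^\eps$. The difficulty is that $\EE_2^\eps$ only controls $R_i^\eps - R_i$ quadratically where $R_i^\eps$ is comparable to $R_i$, and only linearly where $R_i^\eps$ is large. We therefore split $\Omega$ into a bounded region and a large-value region.

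\textbf{Pointwise convexity bound.} Set $h(x,r)=x\ln(x/r)-x+r$ for $r=R_i\in[\underline{a}\rho_i,\overline{a}\rho_i]$, which is a compact subset of $(0,\infty)$ by Remark~\ref{rkalpha}. On $\{R_i^\eps\le M\}$, for a fixed constant $M$ (say $M=2\max(\rho_+,\rho_-)+2$), Taylor's formula with $\partial_x^2 h=1/x\ge 1/M$ gives
\[
h(R_i^\eps,R_i)\ge \frac{1}{2M}|R_i^\eps-R_i|^2 .
\]
Hence
\[
\int_{\{R_i^\eps\le M\}}|R_i^\eps-R_i|^2\le 2M\,\EE_2^\eps(t),
\]
using $h\ge 0$ everywhere.

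\textbf{Large-value region.} On $\{R_i^\eps>M\}$ we have $R_i^\eps=\alpha_i^\eps\rho_i^\eps\le\rho_i^\eps$, so $\rho_i^\eps> M\ge 2\rho_i+2$. This forces $|\rho_i^\eps-\rho_i|\ge 1$, and also $|R_i^\eps-R_i|\le R_i^\eps\le \rho_i^\eps\le C|\rho_i^\eps-\rho_i|$. Therefore
\[
\int_{\{R_i^\eps>M\}}|R_i^\eps-R_i|^2\le C\int_\Omega|\rho_i^\eps-\rho_i|^2 .
\]
Proposition~\ref{prop.rhoE1} with $p=2\le\gamma_i$ (allowed since $\gamma_i\ge2$) bounds the right-hand side by $C(\eps^2\EE_1^\eps)^{\beta_{1,i}}+C(\eps^2\EE_1^\eps)^{\beta_{2,i}}$.

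One can do better here: use $\alpha_i^\eps|\rho_i^\eps-\rho_i|^2\le C\alpha_i^\eps H_i$ from \eqref{eqB}, with $R_i^\eps\le \alpha_i^\eps\rho_i^\eps$ and $\alpha_i^\eps\le1$, to get $|R_i^\eps|^2\le \alpha_i^\eps(\rho_i^\eps)^2\le C\alpha_i^\eps|\rho_i^\eps-\rho_i|^2$. This yields a linear bound $C\eps^2\EE_1^\eps$, which is still of the stated form.

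\textbf{Conclusion and main obstacle.} Adding the two regions gives the claimed estimate. The main point is choosing $M$ so that the threshold on $R_i^\eps$ translates into a threshold on $\rho_i^\eps$ away from $\rho_i$. That translation is what allows the large-value region to be controlled by the density estimates from $\EE_1^\eps$; otherwise $\EE_2^\eps$ only gives linear, not quadratic, growth there.

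The hypothesis that $\int R_{\pm,0}^\eps$ is bounded enters only through Proposition~\ref{prop.rhoE1}. We also need $R_i$ bounded above and below uniformly on $[0,T]$, which follows from the transport of $\alpha_i$ with $\alpha_{i,0}\in[\underline{a},\overline{a}]$.
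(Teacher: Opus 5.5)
Your proof is correct, and its main line is the paper's. You split according to a threshold on $R_i^\eps$, and use the strict convexity of $x\ln x$ on the bounded part to get $|R_i^\eps-R_i|^2\le C\bigl(R_i^\eps\ln\frac{R_i^\eps}{R_i}-R_i^\eps+R_i\bigr)$. On the large part you use $\rho_i^\eps\ge R_i^\eps$ to land in $\{\rho_i^\eps\ge 2\rho_i\}$, where $|R_i^\eps-R_i|^2\le C|\rho_i^\eps-\rho_i|^2$, and then apply Proposition~\ref{prop.rhoE1} with $p=2$. The paper instead thresholds relatively, on $\{R_i^\eps\le\tilde r R_i\}$ with $\tilde r=2/\underline{a}$, which uses the lower bound $R_i\ge\underline{a}\rho_i$. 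It also invokes continuity of $(r\ln r-r+1)/(r-1)^2$ on $[0,\tilde r]$ rather than your bound $\partial_x^2h\ge 1/M$. These differences are cosmetic.

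Your optional refinement, on the other hand, is a genuine improvement over the paper. Because $(R_i^\eps)^2=(\alpha_i^\eps)^2(\rho_i^\eps)^2\le\alpha_i^\eps(\rho_i^\eps)^2$, the large-value region is controlled by exactly the $\alpha_i^\eps$-weighted quantity that \eqref{eqB} bounds by $\alpha_i^\eps H_i(\rho_i^\eps|\rho_i)$. This gives
\[
\|(R_i^\eps-R_i)(t,\cdot)\|_{L^2(\Omega)}^2\le 2M\,\EE_2^\eps(t)+C\eps^2\EE_1^\eps(t),
\]
that is, $\beta_{1,i}=\beta_{2,i}=1$. It bypasses Proposition~\ref{prop.rhoE1} entirely, in particular its more delicate case $i=-$. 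There, the piece $\int_\Omega\alpha_+^\eps|\rho_-^\eps-\rho_-|^2$ of the unweighted norm is not controlled by $H_-$ and must be recovered through the pressure relation and the higher integrability of $\rho_+^\eps$. The refinement also makes the hypothesis on $\int_\Omega R_{\pm,0}^\eps$ superfluous for this proposition. Finally, since the dependence on $\eps^2\EE_1^\eps$ is now linear, the Gronwall argument of Section~\ref{sect4.1} closes for $\eps\le 1$ without the bootstrap time $T_\eps$.
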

\begin{proof}
Let $\tilde r>0$ a number that will be fixed later on. Using that the function
\[
\frac{r\ln r -(r-1)}{(r-1)^2}
\]
is continuous on $[0,\tilde r]$ and positive by the convexity of $r\mapsto r\ln r$, there exists $C(\tilde r)>0$ such that
\[
(r-1)^2 \leq C(\tilde r)\Big( r\ln r -(r-1)\Big) , \quad \forall r\in [0,\tilde r].
\]
Writing this inequality for $r=R_i^\eps/R_i$, we have
\[
(R_i^\eps-R_i)^2 \leq C(\tilde r)R_i \Big( R_i^\eps\ln \frac{R_i^\eps}{R_i} -R_i^\eps+R_i\Big) , \quad \text{on } \{ R_i^\eps \leq \tilde r R_i \}
\]
Splitting the integral in two domains, we have
\[
\| R_i^\eps -R_i \|_{L^2(\Omega)}^2 \leq C(\tilde r)\EE_2^\eps(t) + C\int_{R_i^\eps \geq \tilde r R_i } (1 + \rho_i^{\eps 2} )
\]

When $R_i^\eps \geq \tilde r R_i$, we have $\rho_i^\eps \geq R_i^\eps \geq \tilde r R_i\geq \tilde r \underline{a} \rho_i$. Choosing $\tilde r =2/\underline{a}$, we can use the boundedness of $r\mapsto \frac{1+r^2}{(r-\rho_i)^2}$ on $[2\rho_i,\infty)$ and Proposition~\ref{prop.rhoE1} to state
\[
\int_{R_i^\eps \geq \tilde r R_i } (1 + \rho_i^{\eps 2} ) \leq \int_{\rho_i^\eps \geq 2 \rho_i } (1 + \rho_i^{\eps 2} ) \leq C \| \rho_i^\eps -\rho_i\|_{L^2}^2 \leq C \Big(\eps^{2} \EE_1^\eps(t) \Big)^{2\beta_{1,i}(2)}+C\Big(\eps^{2} \EE_1^\eps(t) \Big)^{2\beta_{2,i}(2)}
\]
which ends this proof.
\end{proof}

\section{Uniform estimates and weak convergence of the divergence to zero}\label{convR}


In this section, we derive uniform estimates for $\EE_1^\varepsilon + \EE_2^\varepsilon$ using a Gronwall argument. These bounds allow us to prove the strong convergence of the densities $\rho_\pm^\eps$ in $L^\infty(0,T;L^p(\Omega))$ for any $p<\min (\gamma_+,\gamma_-)$ and the convergence of the partial densities $R_\pm^\varepsilon$ in $C^0([0,T];H^{-1}(\Omega))$. Finally, this enables us to conclude that $\div u^\eps$ converges weakly to 0 in $L^2((0,T)\times \Omega)$.

We work under the hypotheses of the main theorem~\ref{mainth}, in particular, we will always denote by $(\rho^{\epsilon}_+,\rho^{\epsilon}_-,\alpha^{\epsilon}_+,u^{\epsilon})$ a weak solution of System~\eqref{system_eps} in the sense of Definition~\ref{defp} and $(\rho_+,\rho_-,\alpha_+,u)$ a solution of System~\eqref{limitsys}, verifying the regularity properties of Theorem~\ref{sol-lim}.

\subsection{Uniform bound for the relative entropies}\label{sect4.1}

We use Proposition~\ref{prop.estRL2} to write for $i=+,-$
\begin{align*}
 \| R_i^\eps - R_i\|_{L^2} \| \nabla (u^\eps -u)\|_{L^2} &\leq \frac{1}{2\delta_1} \| R_i^\eps - R_i \|_{L^2}^2 + \frac{\delta_1}{2} \lVert \nabla (u-u^\eps) \rVert_{L^2}^2 \\
 &\leq \frac{C}{\delta_1}\left( (\eps^2 \EE^\eps_1)^{\beta_1}+(\eps^2 \EE^\eps_1)^{\beta_2} +\EE_2^\eps \right) + \frac{\delta_1}{2} \lVert \nabla ( u-u^\eps) \rVert_{L^2}^2.
\end{align*}
with an arbitrary $\delta_1>0$, where $\beta_1, \beta_2>0$.
Therefore, using Proposition~\ref{proprelR}, we obtain for $\beta=2\min(\beta_1,\beta_2)>0$
\begin{equation}
 \label{dtE2}
 \EE^\eps_{2}(t) - \EE^\eps_{2}(0) \leq C(\delta_1) \int_0^t (\EE^\eps_1 +\EE^\eps_{2})
 + \frac{\delta_1}{2} \int_0^t \lVert \nabla ( u-u^\eps) \rVert_{L^2(\Omega)}^2 + C(\delta_1)\eps^\beta,
\end{equation}
for all $t\leq T_\eps$ where 
\[
T_\eps:=\sup_{\tilde T \leq T}\{\tilde T , \ \eps^2\EE^\eps_1(t)\leq 1 \text{ on }[0,\tilde T]\}.
\]

In the case $\beta_i<1$, for $i=+,-$, we use the estimate 
\[
\eps^{2\beta_i} (\EE^\eps_1(t))^{\beta_i} \leq C \eps^{2\beta_i} \big(\EE^\eps_1(t)+1\big)
\]
valid for all $t\leq T_\eps$, which allows us to recover a term in $\EE_1^\eps$ and conclude.\\

We now apply the same strategy to $\EE^\eps_1$. More precisely, recall that Proposition~\ref{prop2} gives
\begin{align}
 \EE^\eps_1(t) - \EE^\eps_1(0) + \mu \int_0^t\!\!\! \int_{\Omega} \left( \nabla (u^{\epsilon}-u) \right)^2 +& (\lambda+\mu) \int_0^t\!\!\! \int_{\Omega} \left( \div (u^{\epsilon}-u) \right)^2 \nonumber\\
 \leq& \mu \int_0^t\!\!\! \int_\Omega \frac{\rho^\eps - \rho}{\rho} \Delta u\cdot (u-u^\eps) \label{estim1} \\
 &+ \int_0^t\!\!\! \int_{\Omega} \rho^\eps (u-u^\eps) \cdot \nabla u \cdot (u^\eps-u) \label{estim2} \\
 &+ \int_0^t\!\!\! \int_{\Omega} \frac{\rho-\rho^\eps}{\rho} \nabla \Pi \cdot (u-u^\eps). \label{estim3}\\
 &- \int_0^t\!\!\! \int_{\Omega} \nabla \Pi \cdot (u-u^\eps). \label{estim4}
\end{align}

We deduce first an estimate for the term \eqref{estim2}
\begin{equation*}
 \left\lvert \int_{\Omega} \rho^\eps (u-u^\eps) \cdot \nabla u \cdot (u^\eps-u) \right\rvert \leq \lVert \nabla u \rVert_{L^\infty} \int_{\Omega} \rho^\eps \left\lvert u^\eps -u \right\rvert^2
 \leq 2\lVert \nabla u \rVert_{L^\infty} \EE^\eps_1.
\end{equation*}

Now we focus on the term \eqref{estim1}. Recalling that $\rho^\eps-\rho = \sum_{i=+,-}R_i^\eps - R_i$, we use Hölder inequality ($1/2+1/3+1/6=1$), to get
\begin{equation*}
 \left\lvert \mu \int_{\Omega} \frac{R_\pm^\eps - R_\pm}{\rho} \Delta u (u-u^\eps) \right\rvert \leq \mu \lVert \frac{1}{\rho} \Delta u \rVert_{L^3} \lVert u-u^\eps \rVert_{L^6} \lVert R_\pm^\eps - R_\pm \rVert_{L^2}.
\end{equation*}
Therefore, using Young’s inequality with an arbitrary $\delta_2 >0$, Sobolev embeddings and Poincaré inequality, we obtain from Proposition~\ref{prop.estRL2} 
\begin{align*}
 \left\lvert \mu \int_{\Omega} \frac{R_\pm^\eps - R_\pm}{\rho} \Delta u (u-u^\eps) \right\rvert 
 &\leq \frac{C}{\delta_2} \lVert \frac{1}{\rho} \Delta u \rVert_{L^3(\Omega)}^2\lVert R_\pm^\eps - R_\pm \rVert_{L^2}^2 + C \delta_2 \lVert u-u^\eps \rVert_{L^6(\Omega)}^2\\
& \leq C(\delta_2) (\EE^\eps_1 +\EE^\eps_{2} +\eps^\beta) + \delta_2 C \lVert \nabla (u-u^\eps) \rVert^2_{L^2(\Omega)},
\end{align*}
for all $t\leq T_\eps$.

For \eqref{estim3}, we can apply the same argument where now $\nabla \Pi$ replaces $\mu\Delta u$. We have 
\begin{equation*}
 \int_{\Omega} \left\lvert \frac{\rho-\rho^\eps}{\rho} \nabla \Pi (u-u^\eps) \right\rvert \leq C(\delta_2) (\EE^\eps_1 +\EE^\eps_{2} +\eps^\beta) + \delta_2 C \lVert \nabla (u-u^\eps) \rVert^2_{L^2(\Omega)},
\end{equation*}
for all $t\leq T_\eps$.

 We end by finding an estimate for \eqref{estim4}, using simply the Poincaré inequality:
\begin{equation*}
 \int_{\Omega} \left\lvert \nabla \Pi(u-u^\eps) \right\rvert \leq \frac{1}{\delta_2} \lVert \nabla \Pi \rVert_{L^2}^2 + \delta_2 C \lVert \nabla( u^\eps - u) \rVert_{L^2}^2\leq C(\delta_2)+ \delta_2 C \lVert \nabla( u^\eps - u) \rVert_{L^2}^2.
\end{equation*}

Therefore, we have proved the following inequality for $\EE^\eps_1$:
\begin{multline}\label{dtE1}
 \EE^\eps_1(t) - \EE^\eps_1(0) + \mu \int_0^t\!\!\! \int_{\Omega} \left\lvert \nabla (u^{\epsilon}-u) \right\rvert^2 + (\lambda+\mu) \int_0^t\!\!\! \int_{\Omega} \left\lvert \div (u^{\epsilon}-u) \right\rvert^2 \\
 \leq C(\delta_2) \int_0^t (\EE^\eps_1 +\EE^\eps_{2}) + C\delta_2 \int_0^t \lVert \nabla( u^\eps - u) \rVert_{L^2}^2 + C(\delta_2) 
\end{multline}
where $\delta_2>0$ will be chosen in terms of $\mu$ and $\lambda$. Let us note that the constants in this inequality depends on $\|\Delta u\|_{L^\infty L^3}$, $\|\nabla u\|_{L^\infty L^\infty}$, $\|\nabla \Pi\|_{L^\infty L^3}$ and $T$.

Putting together the estimates for $\EE^\eps_{2}$ and $\EE^\eps_1$, see \eqref{dtE2} and \eqref{dtE1}, we conclude that
\begin{equation}\label{dtE12}
 (\EE^\eps_1+\EE^\eps_2)(t) - (\EE^\eps_1+\EE^\eps_2)(0) + C_{\lambda,\mu} \int_0^t\!\!\! \lVert \nabla( u^\eps - u) \rVert_{L^2}^2 
 \leq C\int_0^t (\EE_1^\eps+\EE^\eps_2) +C,
\end{equation}
for all $t\leq T_\eps$, where $\delta_1$ and $\delta_2$ are chosen in terms of $\lambda$ and $\mu$ in order to get $C_{\lambda,\mu}>0$.
A Gronwall argument yields the uniform bound
\[
\EE^\eps_1(t) + \EE^\eps_2(t) \le C_T, \quad t \in [0,T_\eps],
\]
where $C_T$ is independent of $T_\eps$. Choosing $\eps$ small enough so that $\eps^2 C_T \le \frac12$, we may have
\[
\eps^2 \EE^\eps_1(t) \le \frac12 \quad \text{on } [0,T_\eps].
\]
By definition of $T_\eps$, this implies that $T_\eps = T$, and therefore
\[
\EE^\eps_1(t) + \EE^\eps_2(t) \le C_T, \quad t \in [0,T].
\]

The boundedness of $\EE_1^\eps(t)$ has several direct consequences:
\begin{itemize}
\item By the definition of $\EE_1^\eps$ \eqref{def-E1}, the modulated internal energies,
 \begin{equation*}
 \frac{1}{\varepsilon^2} \alpha_i^\varepsilon H_i(\rho_i^\eps|\rho_i) \text{ is uniformly bounded in } L^\infty(0,T; L^1(\Omega)), \quad i=+,-,
 \end{equation*}
 \item By the inequality \eqref{dtE12}, the velocity gradient,
 \begin{equation*}
 \nabla u^\varepsilon \text{ is uniformly bounded in } L^2(0,T; L^2(\Omega)),
 \end{equation*}
which implies, by the Poincaré inequality in the case of Dirichlet boundary conditions, that the velocity field \(u^\varepsilon\) is uniformly bounded in \(L^2(0,T; H_0^1(\Omega))\).
\item By Proposition~\ref{prop.rhoE1}, we state the strong convergence of the densities $\rho_i^\eps$ to $\rho_i$ in $L^\infty(0,T;L^p(\Omega))$ for all $p<\gamma_i$.
\item By \eqref{est.rhoLgamma}, we finally deduce that $\rho_+^\varepsilon$ is uniformly bounded in $L^\infty(0,T;L^{\gamma_+}(\Omega))$ and $\rho_-^\varepsilon$ is uniformly bounded in $L^\infty(0,T;L^{\gamma_-}(\Omega))$.
\end{itemize}

\subsection{Weak convergence of the divergence to zero}

The previous section also implies that the partial mass densities $R_i^\varepsilon = \alpha_i^\varepsilon \rho_i^\varepsilon$ are uniformly bounded in \(L^\infty(0,T; L^{\gamma_i}(\Omega))\) for \(i=+,-\), because $\alpha_i^\eps\in [0,1]$.

Since $\alpha_\pm^\varepsilon \leq 1$, the sequences $(\alpha_\pm^\varepsilon)_\varepsilon$ are uniformly bounded in $L^\infty((0,T)\times\Omega)$. 
Hence, up to extraction of a subsequence (still denoted by $\alpha_\pm^\varepsilon$), there exist limits $\bar{\alpha}_\pm$ such that
\[
\alpha_\pm^\varepsilon \rightharpoonup \bar{\alpha}_\pm \quad \text{weakly in } L^2(0,T;L^q(\Omega)),
\]
where $1/q+1/p=1/2$, with $p\in (2,\min(\gamma_+,\gamma_-))$ and 
\[
\bar{\alpha}_+ + \bar{\alpha}_- = 1.
\]
Using the product of a strongly convergent sequence and a weakly convergent sequence, we deduce that, for $i=+,-$,
\[
R_i^\varepsilon = \alpha_i^\varepsilon \rho_i^\varepsilon \rightharpoonup \bar{\alpha}_i \rho_i 
\quad \text{weakly in } L^2((0,T)\times \Omega).
\]

We recall that $u^\varepsilon$ is uniformly bounded in $L^2(0,T;H^1_0(\Omega))$ by the previous subsection. 
By Sobolev embeddings in space dimension $d=3$, we have that $u^\eps$ is uniformly bounded in $L^2(0,T;L^6(\Omega))$. 
Since $R_\pm^\varepsilon \in L^\infty(0,T;L^2(\Omega))$, Hölder's inequality implies that $R_\pm^\varepsilon u^\varepsilon$ is uniformly bounded in $L^2(0,T;L^{3/2}(\Omega))$, and consequently, $\partial_t R_\pm^\varepsilon =-\div (R_\pm^\varepsilon u^\varepsilon)$ is uniformly bounded in $L^2(0,T;W^{-1,3/2}(\Omega))$.

Applying the Aubin--Lions lemma together with the compact and continuous embeddings
\[
L^2(\Omega) \hookrightarrow\hookrightarrow H^{-1}(\Omega)
\hookrightarrow W^{-1,\frac{3}{2}}(\Omega),
\]
we infer that, up to the extraction of a subsequence,
\[
R_\pm^\varepsilon \to \bar{R}_\pm
\quad \text{strongly in } C^0([0,T];H^{-1}(\Omega)).
\]
By uniqueness of the limit, we conclude that
\[
\bar{R}_\pm = \bar{\alpha}_\pm \rho_\pm.
\]

Since $u^\eps-u $ is uniformly bounded in $L^2(0,T;H^1_0(\Omega))$, we infer by the Banach–Alaoglu theorem that there exists a function 
$v \in L^2(0,T;H^1_0(\Omega))$ and a subsequence (not relabeled) such that
\[
u^\varepsilon - u \rightharpoonup v
\quad \text{weakly in } L^2(0,T;H^1_0(\Omega)).
\]

Passing to the limit in the mass equations, we obtain, for $i=+,-$,
\[
\partial_t (\bar{\alpha}_i \rho_i) + \operatorname{div}\big( \bar{\alpha}_i \rho_i (u+v) \big) = 0,
\]
in the sense of distribution,
which yields
\[
\partial_t \bar{\alpha}_i + \operatorname{div}\big( \bar{\alpha}_i (u+v) \big) = 0,
\]
because we recall that $\rho_i$ is constant, see the limit system~\eqref{limitsys}.

Summing the two equations for $i=+,-$, and using $\bar{\alpha}_+ + \bar{\alpha}_- = 1$ together with $\operatorname{div} u = 0$, we deduce
\[
\operatorname{div} v = 0.
\]

Therefore, we conclude that
\begin{equation*}
\operatorname{div} u^\varepsilon \rightharpoonup 0 \quad \text{weakly in } L^2((0,T)\times\Omega).
\end{equation*}
By the uniqueness of this limit, we realize that we do not need to extract a subsequence and that this weak convergence holds true for the full sequence $\eps\to 0$.

\section{Strong convergences}\label{End}

We revisit the proof of the estimates \eqref{dtE12}. Keeping \eqref{estim4} unchanged, we observe that combining \eqref{dtE2}--\eqref{estim3} yields 
\[
(\EE^\eps_1+\EE^\eps_2)(t) - (\EE^\eps_1+\EE^\eps_2)(0) + C_{\lambda,\mu} \int_0^t\!\!\! \lVert \nabla( u^\eps - u) \rVert_{L^2}^2 
 \leq C\int_0^t (\EE_1^\eps+\EE^\eps_2) +C\eps^\beta + \Big|\int_0^t\!\!\! \int_{\Omega} \nabla \Pi \cdot (u-u^\eps) \Big|,
\]
for all $t\leq T$. Denoting 
\[
r_\eps(t):=(\EE^\eps_1+\EE^\eps_2)(0) + C\eps^\beta + \Big|\int_0^t\!\!\! \int_{\Omega} \Pi \div(u-u^\eps) \Big|\geq 0
\]
the Gronwall lemma gives
\[
(\EE^\eps_1+\EE^\eps_2)(t)\leq r_\eps(t) + C\int_0^t r_\eps(s) e^{C(t-s)} \dd s \leq r_\eps(t) + Ce^{CT}\int_0^t r_\eps(s) \dd s .
\]
In particular
\[
\int_0^T (\EE^\eps_1+\EE^\eps_2)(t)\dd t\leq C \int_0^T r_\eps(t)\dd t.
\]
For every $t\in [0,T]$, using that $\mathds{1}_{[0,t]}\Pi\in L^2((0,T)\times \Omega)$, the weak convergence of $\div(u-u^\eps)$ to 0 means that $r_\eps(t)\to 0$ pointwise as $\eps\to 0$. Moreover, we have the uniform estimates
\[
r_\eps(t) \leq (\EE^\eps_1+\EE^\eps_2)(0) + C\eps^\beta + \int_0^t\!\!\! \int_{\Omega} |\Pi| | \div(u-u^\eps)| \leq C
\]
because $\div u^\eps$ is uniformly bounded in $L^2((0,T)\times \Omega)$. By the dominated convergence theorem, we have
\[
\int_0^T r_\eps(t)\dd t \to 0
\]
hence $\EE_1^\eps$ and $\EE_2^\eps$ tend to zero in $L^1(0,T)$. As they are uniformly bounded in $L^\infty(0,T)$, we get by interpolation that $\EE_1^\eps$ and $\EE_2^\eps$ tend to zero in $L^q(0,T)$ for any $q\in [1,\infty)$, giving in particular the last convergence stated in Theorem~\ref{mainth}.

The first inequality of this subsection implies
\[
 \| u^\eps - u \|_{L^2(0,T;H^1_0(\Omega))}^2 \leq C \int_0^T (\EE^\eps_1+\EE^\eps_2)(s)\dd s + Cr_\eps(T) \to 0,
\]
which gives the fourth convergence in Theorem~\ref{mainth}.

By Proposition~\ref{prop.estRL2}, we also deduce the strong convergence of $R_i^\eps$ to $R_i^\eps$ in $L^q(0,T;L^2(\Omega))$ for any $q\in [1,\infty)$, which is the third convergence in Theorem~\ref{mainth}.\newline

Concerning the strong convergence of the volume fraction, we use the bi-fluid structure to obtain a control of the volume fractions by the partial densities. 

We first prove the following fundamental result:
\begin{lemma}\label{lemma_alpha}
Assume that there exists a constant $c>0$ such that $R_+^\eps > c$. Therefore, there exists $C>0$ such that
\begin{equation*}
 \lvert \alpha_+^\eps - \alpha_+ \rvert \leq C \lvert R_+^\eps - R_+ \rvert + C \lvert R_-^\eps - R_- \rvert.
\end{equation*}
\end{lemma}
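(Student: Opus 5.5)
We use the algebraic closure to express $\alpha_+^\eps$ as an explicit function of the pair $(R_+^\eps,R_-^\eps)$, and then check that this function is Lipschitz on the relevant region. The closure gives $\rho_-^\eps=(\rho_+^\eps)^{\gamma_+/\gamma_-}$. Together with $\alpha_+^\eps+\alpha_-^\eps=1$ and $\rho_+^\eps>0$, this yields
\[
\frac{R_+^\eps}{\rho_+^\eps}+\frac{R_-^\eps}{(\rho_+^\eps)^{\gamma}}=1,\qquad \gamma=\gamma_+/\gamma_- .
\]
Set $\rho_+^\eps=s$ and $G(s;R_+,R_-):=R_+ s^{-1}+R_- s^{-\gamma}$. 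The map $s\mapsto G$ is strictly decreasing on $(0,\infty)$, tends to $+\infty$ at $0$ and to $0$ at $\infty$. Hence $s$ is the unique root of $G=1$, and we write $s=S(R_+,R_-)$. Then $\alpha_+^\eps=R_+^\eps/S(R_+^\eps,R_-^\eps)$. The limit quantities satisfy the same relation, $\alpha_+=R_+/S(R_+,R_-)$, with $S(R_+,R_-)=\rho_+=C_0^{1/\gamma_+}$ constant.

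Next we show $S$ is Lipschitz near the relevant set. By the implicit function theorem,
\[
\partial_{R_+}S=\frac{s^{-1}}{R_+s^{-2}+\gamma R_-s^{-\gamma-1}}=\frac{s}{R_+ +\gamma R_- s^{1-\gamma}}, \qquad \partial_{R_-}S=\frac{s^{1-\gamma}}{R_+ +\gamma R_- s^{1-\gamma}}.
\]
Both denominators are $\ge R_+$. The hypothesis $R_+^\eps>c$ is used exactly here: it keeps the denominators away from zero along the segment joining $(R_+^\eps,R_-^\eps)$ and $(R_+,R_-)$. For the limit state we have $R_+=\alpha_+\rho_+\ge \underline{a}\rho_+$. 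Then, on the segment, the first coordinate is at least $\min(c,\underline a\rho_+)$.

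The main obstacle is that the numerators involve $s$ and $s^{1-\gamma}$, which need upper bounds. On the segment, $s$ solves $G=1$, which forces $s\ge R_+$, since $R_+/s\le 1$. This bounds $s$ below by $\min(c,\underline a\rho_+)$ and hence bounds $s^{1-\gamma}$ when $\gamma>1$; if $\gamma<1$, swap the roles of the phases as in Proposition~\ref{prop.rhoE1}. For the upper bound on $s$ there are two routes. Either we restrict to where $\rho_+^\eps\le 2\rho_+$, or we note that $\partial_{R_+}S\le s/R_+$ and control the ratio differently.

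A cleaner alternative avoids $S$ altogether and handles the large-$s$ issue directly. Write $\alpha_+^\eps-\alpha_+=R_+^\eps/\rho_+^\eps-R_+/\rho_+$ and split it as
\[
\frac{R_+^\eps-R_+}{\rho_+^\eps}+R_+\frac{\rho_+-\rho_+^\eps}{\rho_+^\eps\rho_+}.
\]
The first term is at most $C|R_+^\eps-R_+|$, because $\rho_+^\eps\ge R_+^\eps>c$. For the second term we bound $|\rho_+^\eps-\rho_+|$ by $C(|R_+^\eps-R_+|+|R_-^\eps-R_-|)$ via the mean value theorem applied to $G$ in $s$:
\[
|G(\rho_+^\eps;R_+,R_-)-G(\rho_+;R_+,R_-)|=|G(\rho_+^\eps;R_+,R_-)-G(\rho_+^\eps;R^\eps_+,R^\eps_-)|\le c^{-1}|R_+-R_+^\eps|+c^{-\gamma}|R_--R_-^\eps|.
\]
On the left-hand side, $|\partial_sG|\ge R_+s^{-2}$. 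This lower bound degenerates as $s\to\infty$, and it is handled by the prefactor $1/\rho_+^\eps$ in the second term. Concretely, $R_+\,|s^{-1}-\rho_+^{-1}|$ is itself a difference of the form $G$-difference minus the $R_-$ part, and $|s^{-\gamma}-\rho_+^{-\gamma}|$ is comparable to $|s^{-1}-\rho_+^{-1}|$ because both $s^{-1}$ and $\rho_+^{-1}$ lie in $(0,1/c]$. Since $s^{-\gamma}$ is increasing in $s^{-1}$ and both terms of $G$ move in the same direction, $R_+|s^{-1}-\rho_+^{-1}|\le |G(s;R_+,R_-)-G(\rho_+;R_+,R_-)|$. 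This bound, combined with the display above, closes the estimate with $C$ depending only on $c$, $\gamma_\pm$, $C_0$ and $\underline a$.
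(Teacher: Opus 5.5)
Your final argument (the ``cleaner alternative'') is correct and complete, and it takes a genuinely different route from the paper. The paper eliminates $\rho_+^\eps$. It rewrites the closure as $d\,\alpha_+^\gamma+\alpha_+-1=0$ with the single parameter $d=R_-/R_+^\gamma$. Lemma~\ref{technicalemma} then shows that the root depends Lipschitz-continuously on $d$, via a case split and the implicit function theorem on a compact set. This requires $R_-/R_+^\gamma\in[\underline c,\overline c]$, i.e.\ the bounds $\underline a\le\alpha_\pm\le\overline a$ on the limit. Finally the mean value theorem for $x\mapsto x^{-\gamma}$ bounds $|d_1-d_2|$, which also needs a lower bound on the limit $R_+$.

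You instead keep $s=\rho_+^\eps$ as the unknown and use that the closure $R_+/s+R_-/s^\gamma=1$ is linear in $(R_+,R_-)$. The key observation is that $R_+(s^{-1}-\rho_+^{-1})$ and $R_-(s^{-\gamma}-\rho_+^{-\gamma})$ have the same sign. Hence
\[
R_+\Bigl|\frac{1}{\rho_+^\eps}-\frac{1}{\rho_+}\Bigr|\le\frac{|R_+^\eps-R_+|}{\rho_+^\eps}+\frac{|R_-^\eps-R_-|}{(\rho_+^\eps)^\gamma}.
\]
Combined with $\rho_+^\eps\ge R_+^\eps>c$, this gives the lemma with the explicit constant $C=\max(2/c,c^{-\gamma})$. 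It holds for every $\gamma>0$ and uses no implicit function theorem, no lower bound on $R_+$ or $\alpha_\pm$, and no ordering of $\gamma_\pm$. The paper's route is more structural, since it exhibits $\alpha_+$ as a function of one ratio. The price is compactness arguments and constants depending on $\underline a,\overline a$.

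When you write it up, drop the first route through the map $S$: you leave it unfinished, since the upper bound on $s$ is never closed. Also drop the remark that $|s^{-\gamma}-\rho_+^{-\gamma}|$ is comparable to $|s^{-1}-\rho_+^{-1}|$. It is not used, and its stated justification $\rho_+^{-1}\le 1/c$ does not follow from the hypotheses, because nothing relates $c$ to $\rho_+=C_0^{1/\gamma_+}$. Finally, what you call the mean value theorem is really just the exact identity $G(\rho_+^\eps;R_+^\eps,R_-^\eps)=G(\rho_+;R_+,R_-)=1$ combined with the sign argument above.
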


By symmetry, we have the similar result for $|\alpha_-^\eps - \alpha_- |$ when $R_-^\eps > c$. To prove this result, we first establish a key lemma, which provides the necessary technical tools.

\begin{lemma}\label{technicalemma}
Let $\gamma >0$. For all $d_1, d_2 >0$ such that $d_2 \in [\underline{c},\overline{c}]\subset \R^+_*$, there exists a unique solution $(a_1,a_2) \in (0,1)$ that satisfies
\begin{align*}
 d_1 a_1^\gamma + (a_1-1) = 0,\\
 d_2 a_2^\gamma + (a_2-1) = 0.
\end{align*}
Moreover, there exists a constant $C$ (that only depends on $\gamma, \overline{c}, \underline{c}$) such that
\begin{equation*}
 \lvert a_1 - a_2 \rvert \leq C \lvert d_1 - d_2 \rvert.
\end{equation*}
\end{lemma}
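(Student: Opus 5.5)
Let $f_d(a) := d\,a^\gamma + a - 1$ on $[0,1]$, and treat existence/uniqueness and the Lipschitz bound separately. For fixed $d>0$, $f_d$ is continuous and strictly increasing on $[0,1]$, since it is a sum of the increasing functions $a\mapsto d a^\gamma$ (because $\gamma>0$) and $a\mapsto a-1$. Moreover $f_d(0)=-1<0$ and $f_d(1)=d>0$. The intermediate value theorem together with strict monotonicity gives a unique root $a(d)\in(0,1)$. This applies with $d=d_1$ and $d=d_2$ and yields $a_1,a_2$.

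For the Lipschitz estimate, first note that $d\mapsto a(d)$ is decreasing. Indeed, if $d<d'$ then $f_{d'}(a(d)) = (d'-d)a(d)^\gamma>0$, so $a(d')<a(d)$. Next subtract the two equations:
\[
d_1a_1^\gamma - d_2 a_2^\gamma + (a_1-a_2)=0
\quad\Longleftrightarrow\quad
(a_1-a_2) + d_2\,(a_1^\gamma - a_2^\gamma) = -(d_1-d_2)\,a_1^\gamma .
\]
The factors $a_1-a_2$ and $a_1^\gamma-a_2^\gamma$ have the same sign, and $d_2>0$. Hence the absolute value of the left-hand side is at least $|a_1-a_2|$. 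Since $a_1^\gamma\le 1$, we get directly
\[
|a_1-a_2| \le |a_1-a_2| + d_2|a_1^\gamma-a_2^\gamma| = |d_1-d_2|\,a_1^\gamma \le |d_1-d_2|.
\]
So the constant is $C=1$. In particular it does not depend on $\underline c,\overline c$ or $\gamma$, and no bound on $d_1$ is needed.

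The argument is short, and the only place where care is needed is the sign bookkeeping in this last step. That is why I isolate $d_2$, whose positivity is known, as the coefficient of $a_1^\gamma-a_2^\gamma$, while $d_1$ appears only multiplied by $a_1^\gamma\le1$. This avoids any lower bound on $a_1$ or $a_2$, which would otherwise be needed if one used the mean value theorem on $a^\gamma$ when $\gamma<1$. The hypothesis $d_2\in[\underline c,\overline c]$ is therefore not needed for the estimate itself. It becomes relevant only later, when the lemma is applied to prove Lemma~\ref{lemma_alpha} with $d$ built from $R_\pm$.
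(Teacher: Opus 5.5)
Your proof is correct, but for the Lipschitz bound it takes a different route from the paper. The paper splits into two cases:
\begin{itemize}
\item If $d_1\notin[\underline c/2,2\overline c]$, then $|d_1-d_2|$ is bounded below, and $|a_1-a_2|\le 1$ suffices.
\item Otherwise it applies the implicit function theorem to $F(d,x)=dx^\gamma+x-1$ on a compact set, computes $g'(d)=-a^\gamma/(\gamma d a^{\gamma-1}+1)$, and concludes with the mean value theorem. The resulting constant depends on $\underline c,\overline c$.
\end{itemize}

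Your identity $(a_1-a_2)+d_2(a_1^\gamma-a_2^\gamma)=-(d_1-d_2)a_1^\gamma$ works differently. The two terms on the left have the same sign and $d_2>0$, while $a_1^\gamma\le 1$. This is a discrete version of the same derivative bound, since $|g'(d)|\le a^\gamma\le 1$. It needs neither differentiability, nor the implicit function theorem, nor a case split. It also gives the global constant $C=1$, so the hypothesis $d_2\in[\underline c,\overline c]$ and any dependence on $\gamma$ are unnecessary for this lemma; only $d_2>0$ is used.

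The paper's argument is the more generic tool, useful when no such algebraic splitting is available. Here yours is both shorter and stronger. Your proof that $d\mapsto a(d)$ is decreasing is correct but is not used in the estimate.
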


We start by proving Lemma~\ref{technicalemma}.

\begin{proof}[Proof of Lemma~\ref{technicalemma}]
For each $i =1,2$, we define the function $F_{d_i} : \mathbb{R}_+ \to \mathbb{R}$ by
\[
F_{d_i}(x) := d_i x^\gamma + x - 1.
\]
The function $F_{d_i}$ is strictly increasing with respect to $x$. Moreover, we have
$F_{d_i}(0) = -1$ and $F_{d_i}(1) = d_i > 0$.
By continuity, there exists a unique solution $x_i \in (0,1)$ to the equation
$F_{d_i}(x) = 0$.
We denote this solution by $a_i$ for $i = 1,2$.

We then define the function $g:[\underline{c},\overline{c}]\to (0,1)$ by setting $a = g(d)$ such that $F_d(a)=0$. The monotonicity of $F_{d}$ with respect to $x$ implies that $g$ is decreasing with respect to $d$.

First, consider the case where $d_1 < \underline{c}/2$ or $d_1 > 2\overline{c}$. Since $d_2 \in [\underline{c}, \overline{c}]$, there exists a constant $C > 0$ sufficiently large such that
\[
|d_1 - d_2| \geq 1/C.
\]
Moreover, since $a_1, a_2 \in (0,1)$, it follows that
\[
|a_1 - a_2| \leq 1 \leq C |d_1 - d_2|.
\]

Now consider the case where $d_1 \in [\underline{c}/2,2 \overline{c}]$. Since the function $g$ is decreasing, it follows that $a_1 \in [g(2 \overline{c}), g(\frac{\underline{c}}{2})]$. We then introduce the compact set $K = [\frac{\underline{c}}{2},2 \overline{c}] \times [g(2 \overline{c}), g(\frac{\underline{c}}{2})]$ and and define the function $F: K \to \mathbb{R}$ by $F(d,x) = d x^\gamma +x-1$. The function $F$ is of class $C^\infty$ on $K$.

For any $(d_0,x_0) \in K$ satisfying $F(d_0,x_0)=0$, as $\partial_x F(d_0,x_0) \neq 0$, the Implicit Function Theorem ensures the existence of an open neighbourhood $U_{0,\mathrm{loc}}$ of $d_0$ and a unique function $g_{0,\mathrm{loc}} : U_{0,\mathrm{loc}} \to \mathbb{R}$ such that
\[
x = g_{0,\mathrm{loc}}(d)
\quad \text{and} \quad
F\bigl(d, g_{0,\mathrm{loc}}(d)\bigr)=0
\quad \text{for all } d \in U_{0,\mathrm{loc}}.
\]
Since the solution $(d,x)$ of the equation $F(d,x)=0$ is unique, it follows that
\[
g_{0,\mathrm{loc}} = g.
\]

From the Implicit Function Theorem, we also have
\[
g_{0,\mathrm{loc}}'(d) = -\,\frac{\partial_d F(d,g(d))}{\partial_x F(d,g(d))} = -\,\frac{x^\gamma}{\gamma d x^{\gamma-1}+1}, \qquad \text{with } x = g(d).
\]
Since $(d,x) \in K$, there exists a constant $L>0$ such that
\[
\left| g'(d) \right| \le L \quad \text{for all } d \in \Bigl[\frac{\underline{c}}{2},2 \overline{c}\Bigr].
\]
It then follows that
\begin{equation*}
 |a_1 - a_2| = |g(d_1) - g(d_2)| \le L\, |d_1 - d_2|.
\end{equation*}

Hence, we have shown that there exists a constant $C>0$ such that
\[
|a_1 - a_2| \le C\, |d_1 - d_2|,
\]
which concludes the proof.

\end{proof}

Having established Lemma~\ref{technicalemma}, we can now complete the proof of Lemma~\ref{lemma_alpha}.

\begin{proof}[Proof of Lemma~\ref{lemma_alpha}.]
From the equality of pressures we have
\begin{equation*}
 \frac{R_+^{\gp}}{\ap^{\gp}} = \frac{R_-^{\gm}}{(1-\ap)^{\gm}},
\end{equation*}
which gives the following equation in $\ap$, for $\gamma=\gp/\gm$:
\begin{equation}\label{eqd1}
 \frac{R_-}{R_+^{\gamma}} \ap^\gamma + (\ap-1)=0.
\end{equation}

Similarly, we obtain for $\ap^\eps$:
\begin{equation}\label{eqd2}
 \frac{R_-^\eps}{(R_+^\eps)^\gamma} (\ap^\eps)^\gamma + (\ap^\eps-1) = 0.
\end{equation}

If we denote by $d_1 = \frac{R_-^\eps}{(R_+^\eps)^\gamma}$ and $d_2 = \frac{R_-}{R_+^{\gamma}}$, the hypothesis $d_2 \in [\underline{c},\overline{c}]$ is verified because $\alpha_\pm\in [\underline{a},\overline{a}]\subset (0,1)$, see Remark~\ref{rkalpha}, and we can apply Lemma~\ref{technicalemma}. It follows that we obtain a unique solution $(\ap^\eps,\ap) \in (0,1)$ such that Equations \eqref{eqd1}-\eqref{eqd2} are satisfied and that there exists a constant $C$ such that
\begin{equation*}
 | \ap^\eps - \ap | \leq C \left\lvert \frac{R_-^\eps}{(R_+^\eps)^\gamma} - \frac{R_-}{R_+^{\gamma}} \right\rvert.
\end{equation*}

By adding and subtracting $\frac{R_-}{(R_+^\eps)^{\gamma}}$ and using the assumption $R_+^\eps > c$, we obtain
\begin{equation*}
 | \ap^\eps - \ap | \leq C \left\lvert R_-^\eps - R_- \right\rvert + C R_- \left\lvert \frac{1}{(R_+^\eps)^\gamma} - \frac{1}{R_+^{\gamma}} \right\rvert.
\end{equation*}

Finally, using the mean value theorem for the function $f(x) = \frac{1}{x^\gamma}$ defined for $x \in [\min(c,R_{+,0}),+\infty)$, we can conclude the proof.
\end{proof}

As a consequence of Lemma~\ref{lemma_alpha}, we want to show that
\begin{equation}\label{2ndterm}
 \int_\Omega \rho_\pm^2 \lvert\alpha_\pm^\eps-\alpha_\pm \rvert^2 \leq C\eps^2 \EE^\eps_1 + C \sum_{i=+,-} \int_\Omega \lvert R_i^\eps - R_i \rvert.
\end{equation} 
Let us consider $\tilde{C_0}>0$, depending only on $\overline{a}$ and $C_0$ such that $R_\pm \leq \tilde{C_0}$.
\begin{itemize}
 \item In the set $\Omega_{1,\pm}:=\{x, \ (R_\pm^\eps + R_\pm) \leq 2\tilde C_0\}$, we use the equality:
\begin{equation*}
 \rho_\pm^2 \left(\alpha_\pm^\eps-\alpha_\pm \right)^2 = (\alpha_\pm^\eps)^2 \left(\rho_\pm-\rho_\pm^\eps\right)^2 + 2 \alpha_\pm^\eps \rho_\pm \left( R_\pm^\eps - R_\pm \right)+ \left(R_\pm^2 - (R_\pm^\eps)^2\right).
\end{equation*}

For the last term, we use $(R_\pm^\eps + R_\pm) \leq 2\tilde C_0$ to get:
\begin{equation*}
 \int_{\Omega_{1,\pm}} \lvert (R_\pm^\eps)^2 - R_\pm^2 \rvert \leq \int_{\Omega_{1,\pm}} \lvert R_\pm^\eps - R_\pm \rvert (R_\pm^\eps + R_\pm) \leq C \int_\Omega \lvert R_\pm^\eps - R_\pm \rvert.
\end{equation*}
Therefore in the first set we obtain
\begin{align*}
 \int_{\Omega_{1,\pm}} \rho_\pm^2 \lvert\alpha_\pm^\eps-\alpha_\pm \rvert^2 &\leq \int_\Omega \alpha_\pm^\eps \lvert \rho_\pm^\eps - \rho_\pm \rvert^2 + 2 \lVert \rho_\pm \rVert_{L^\infty} \int_\Omega \lvert R_\pm^\eps - R_\pm \rvert + C \int_\Omega \lvert R_\pm^\eps - R_\pm \rvert\nonumber \\
 &\leq \int_\Omega \alpha_\pm^\eps \lvert \rho_\pm^\eps - \rho_\pm \rvert^2+ C \int_\Omega \lvert R_\pm^\eps - R_\pm \rvert,
\end{align*}
which is controlled by the relative entropy $\EE^\eps_1$ \eqref{def-E1} through Inequality \eqref{eqB}:
\begin{equation*}
 \int_\Omega \alpha_\pm^\eps \lvert \rho_\pm^\eps - \rho_\pm \rvert^2 \leq C\int_{\Omega}\alpha_\pm^\eps H_\pm(\rho_\pm^\eps\vert \rho_\pm) 
 \leq C\eps^2 \EE^\eps_1.
\end{equation*}
where we have used $\gp, \gm \geq 2$.

\item Now in the complementary set $\Omega_{2,\pm} := \{x,\ (R_\pm^\eps + R_\pm) > 2\tilde C_0\}$, we have $R_\pm^\eps>\tilde C_0$ and the result comes directly from Lemma~\ref{lemma_alpha}:
\begin{equation*}
 \int_{\Omega_{2,\pm}} \rho_\pm^2 \lvert\alpha_\pm^\eps-\alpha_\pm \rvert^2 \leq 2 \lVert \rho_\pm \rVert_{L^\infty}^2 \int_\Omega \Big(C \lvert R_+^\eps - R_+ \rvert + C \lvert R_-^\eps - R_- \rvert\Big),
\end{equation*}
\end{itemize}
where we also have used $\lvert\alpha_\pm^\eps-\alpha_\pm \rvert \leq 2$.
This ends the proof of \eqref{2ndterm}.

By the uniform estimates of $\EE_1^\eps(t)$, we finish the proof of Theorem~\ref{mainth} by noticing that \eqref{2ndterm} reads
\[
\| (\alpha_i^\eps - \alpha_i)(t,\cdot)\|_{L^2(\Omega)}^2 \leq C\eps^2 + C \sum_{i=+,-}\| (R_i^\eps - R_i)(t,\cdot)\|_{L^2(\Omega)}\to 0 \quad \text{in }L^q(0,T)
\]
for any $q\in [1,\infty)$.

\bigskip

\noindent {\bf Acknowledgements.} The author would like to thank Didier Bresch and Christophe Lacave for their many tips on writing this text. 
She also gratefully acknowledges the University Savoie Mont Blanc and the Agence Nationale pour la Recherche (ANR) for her PhD position fellowship linked to the CPJ Anamod managed by Christophe Lacave. She also wants to thank for some support of the ANR under France 2030 bearing the reference ANR-23-EXMA-004 (Complexflows project) related to the PEPR Maths-ViVEs. The author wants also to gratefully acknowledge the partial support by the ANR grant ANR-23-CE40-0014-01 (ANR Bourgeons).

\begin{appendices}

\section{Entropy computations}\label{app1}

In this appendix, we derive entropy relations for general velocity fields $u$, extending the computations presented in the main text (see Section~\ref{computationsE1}) and providing a framework that could be applied to stability analyses or weak-strong uniqueness studies. For that, we consider two solutions of System~\eqref{system} (without nondimensionalization and the Mach number): a weak solution still denoted by indices $\eps$, $(\rho^{\epsilon}_+,\rho^{\epsilon}_-,\alpha^{\epsilon}_+,u^{\epsilon})$, and a sufficiently regular solution $(\rho_+,\rho_-,\alpha_+,u)$. We define the following quantities:
\begin{itemize}
 \item the energy
 \begin{equation*}
 E(t) := \frac{1}{2} \int_{\Omega} \Big(\rho^\varepsilon |u^\eps|^2\Big)(t,x) \dd x
 + \sum_{i=+,-} \int_{\Omega} \frac{\alpha_i^{\epsilon}(\rho_i^{\epsilon})^{\gamma_i}(t,x)}{\gamma_i-1} \dd x ,
 \end{equation*}
 \item the modulated entropy
 \begin{equation*}
 \begin{aligned}
 \EE_1(t) &= \EE_1\Big(\rho^{\epsilon}_+,\rho^{\epsilon}_-,\alpha^{\epsilon}_+,u^{\epsilon}| \rho_+,\rho_-,\alpha_+,u\Big) \\
 &:=
 \frac{1}{2} \int_{\Omega} \rho^{\epsilon}(t,x) |u^{\epsilon} - u|^2(t,x)\dd x 
 + \sum_{i=+,-} \int_{\Omega} \alpha_i^{\epsilon}(t,x) H(\rho_i^\eps | \rho_i)(t,x)\dd x .
 \end{aligned}
 \end{equation*}
\end{itemize}

\begin{proposition}\label{prop1app} If $(\rho^{\epsilon}_+,\rho^{\epsilon}_-,\alpha^{\epsilon}_+,u^{\epsilon})$ is a weak solution of System~\eqref{system}, defined as in Definition~\ref{defp}, then for all $(\rho_+,\rho_-,\alpha_+,u)$ sufficiently regular solutions of the same System~\eqref{system}, we have
 \begin{align*}
 \EE_1(t) - \EE_1(0) \leq& - \mu \int_0^t\!\!\! \int_{\Omega} \left( \nabla (u^{\epsilon}-u) \right)^2 - (\lambda+\mu) \int_0^t\!\!\! \int_{\Omega} \left( \div (u^{\epsilon}-u) \right)^2 \dd x \dd s \\
 &+ \int_0^t\!\!\! \int_{\Omega} \left(\alpha_+^{\varepsilon} \rho_{+}^{\varepsilon}+\alpha_-^{\varepsilon} \rho_-^{\epsilon}\right) (\partial_t u + u \cdot \nabla u)\cdot (u-u^{\epsilon}) \dd x \dd s \\
 &+ \int_0^t\!\!\! \int_{\Omega} \left(\alpha_+^{\varepsilon} \rho_{+}^{\varepsilon}+\alpha_-^{\varepsilon} \rho_-^{\epsilon}\right) (u^{\epsilon}-u)\cdot \nabla u \cdot (u-u^{\epsilon}) \dd x \dd s \\
 &+\int_0^t\!\!\! \int_{\Omega} \Big(\alpha_+^\eps p_+(\rho_+) + \alpha_-^\eps p_-(\rho_-) - \left(\alpha_+^{\varepsilon} p_+\left(\rho_{+}^{\varepsilon}\right)+\alpha_-^{\varepsilon} p_-\left(\rho_-^{\varepsilon}\right)\right) \Big) \div u \dd x \dd s \\
 &+ \sum_{i=+,-} \int_0^t\!\!\! \int_{\Omega} \gamma_i \alpha_i^{\epsilon} \rho_i^{\gamma_i-2}(\rho_i - \rho_i^{\epsilon})\partial_t \rho_i \dd x \dd s\\
 &+ \sum_{i=+,-} \int_{\Omega} \gamma_i \rho_i^{\gamma_i-2} (\alpha_i\rho_iu-\alpha_i^{\epsilon} \rho_i^{\epsilon} u^{\epsilon}) \cdot \nabla\rho_i \dd x \dd s \\
 &-\mu \int_0^t\!\!\! \int_{\Omega} \nabla u : \nabla(u^\eps-u) - (\lambda+\mu) \dd x\dd s \int_0^t\!\!\! \int_{\Omega} \div u \div(u^\eps-u) \dd x\dd s.
 \end{align*}
\end{proposition}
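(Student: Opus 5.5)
We follow the proof of Proposition~\ref{prop1}, but now $(\rho_+,\rho_-,\alpha_+,u)$ is a regular solution of System~\eqref{system}. In particular $u$ is no longer divergence free and $\rho_\pm$ are no longer constant. We expand
\[
\EE_1(t)-\EE_1(0)=E(t)-E(0)+\Big[\tfrac12\int_\Omega\rho^\eps|u|^2-\int_\Omega\rho^\eps u^\eps\cdot u\Big]_0^t-\sum_{i}\Big[\int_\Omega\frac{\alpha_i^\eps}{\gamma_i-1}\big(\rho_i^{\gamma_i}+\gamma_i\rho_i^{\gamma_i-1}(\rho_i^\eps-\rho_i)\big)\Big]_0^t,
\]
which is exact by \eqref{H}. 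For $E(t)-E(0)$ we use the energy inequality \eqref{energy_estimate} without the $\eps$-scaling. The kinetic cross terms are treated exactly as before: the two mass equations are tested against $|u|^2/2$ and the momentum equation against $u$. This reproduces the convective terms written with $\rho^\eps=\alpha_+^\eps\rho_+^\eps+\alpha_-^\eps\rho_-^\eps$, the viscous terms, and now the pressure term $-\int_0^t\int_\Omega(\alpha_+^\eps p_+(\rho_+^\eps)+\alpha_-^\eps p_-(\rho_-^\eps))\div u$, which no longer vanishes. The viscous terms are regrouped by the same monotonicity identities, giving $-\mu\|\nabla(u^\eps-u)\|^2-(\lambda+\mu)\|\div(u^\eps-u)\|^2$ together with the remainders $-\mu\int\nabla u:\nabla(u^\eps-u)-(\lambda+\mu)\int\div u\,\div(u^\eps-u)$. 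These remainders are kept, since $\div u\neq0$.

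The new work is the potential part. We rewrite it as
\[
-\sum_i\int\alpha_i^\eps\Big(\tfrac{\gamma_i}{\gamma_i-1}\rho_i^{\gamma_i-1}\rho_i^\eps-\rho_i^{\gamma_i}\Big)=-\sum_i\int R_i^\eps h_i'(\rho_i)+\sum_i\int\alpha_i^\eps p_i(\rho_i),
\]
where $h_i'(\rho)=\frac{\gamma_i}{\gamma_i-1}\rho^{\gamma_i-1}$. The first piece is handled with the continuity equation for $R_i^\eps$ tested against $\phi=h_i'(\rho_i)$, which is admissible because $\rho_i$ is smooth. Since $\nabla h_i'(\rho_i)=\gamma_i\rho_i^{\gamma_i-2}\nabla\rho_i$, this yields
\[
-\int_0^t\!\!\int_\Omega\gamma_i\rho_i^{\gamma_i-2}R_i^\eps\big(\partial_t\rho_i+u^\eps\cdot\nabla\rho_i\big).
\]
The second piece involves $\alpha_i^\eps$, which satisfies no equation of its own. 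The algebraic closure resolves this: $\alpha_+^\eps p_+(\rho_+)+\alpha_-^\eps p_-(\rho_-)=p_+(\rho_+)$ because $p_+(\rho_+)=p_-(\rho_-)$ and $\alpha_+^\eps+\alpha_-^\eps=1$. Hence its time variation is simply $\int_0^t\int\partial_t p_+(\rho_+)$. We write this as $\sum_i\int_0^t\int\alpha_i^\eps\gamma_i\rho_i^{\gamma_i-1}\partial_t\rho_i$, using that each $\alpha_i^\eps$-weighted pressure derivative equals $\partial_t p(\rho_+)$. Combined with the $\partial_t\rho_i$ part of the previous display, this gives the term $\sum_i\int\gamma_i\alpha_i^\eps\rho_i^{\gamma_i-2}(\rho_i-\rho_i^\eps)\partial_t\rho_i$.

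For the transport part, we add and subtract $\int\gamma_i\rho_i^{\gamma_i-2}\alpha_i\rho_i u\cdot\nabla\rho_i$. The sum over $i$ of these subtracted terms equals $\int u\cdot\nabla p(\rho_+)$, using $\sum_i\alpha_i=1$ and $\gamma_i\rho_i^{\gamma_i-1}\nabla\rho_i=\nabla p_i(\rho_i)$ with equal pressures. Integrating by parts with $u|_{\partial\Omega}=0$, it becomes $-\int p_+(\rho_+)\div u=-\int(\alpha_+^\eps p_+(\rho_+)+\alpha_-^\eps p_-(\rho_-))\div u$. This produces exactly the term $\sum_i\int\gamma_i\rho_i^{\gamma_i-2}(\alpha_i\rho_iu-\alpha_i^\eps\rho_i^\eps u^\eps)\cdot\nabla\rho_i$. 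Collected with the pressure term from the momentum step, it also gives the stated pressure difference multiplied by $\div u$. No equation of the strong solution is actually used here beyond pressure equality and the boundary condition, which is consistent with the stated form.

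The main obstacle is the bookkeeping of signs in the $\alpha_i^\eps p_i(\rho_i)$ contributions. These terms must be absorbed through the closure, since there is no transport equation for $\alpha^\eps$ in the weak formulation. A secondary obstacle is checking that every test function and product is well defined under the regularity of Definition~\ref{defp}, in particular $R_i^\eps u^\eps\cdot\nabla\rho_i\in L^1$. This holds by $R_i^\eps\in L^\infty L^{\gamma_i}$, $u^\eps\in L^2L^6$ and smoothness of $\rho_i$.
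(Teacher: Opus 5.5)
Your proposal is correct and follows essentially the same route as the paper's proof. You use the same expansion of $\EE_1$ and the same test functions: $|u|^2/2$ in the mass equations, $u$ in the momentum equation, and $\rho_i^{\gamma_i-1}$ (up to the factor $\gamma_i/(\gamma_i-1)$) in the continuity equation for $R_i^\eps$. You rely on the same closure identity $\sum_i\alpha_i^\eps p_i(\rho_i)=p_+(\rho_+)$ and the same add-and-subtract followed by integration by parts, and your signs check out, including the $+\int_0^t\!\int_\Omega p_+(\rho_+)\div u$ term produced by the subtracted transport terms.
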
 

\begin{remark}
The equation satisfied by the strong solution $(\rho_\pm, \alpha_+,u)$ is not written explicitly in compact form in the right-hand side above. Instead, its contribution appears through the various decomposed terms, which correspond to the expansion of the momentum equation.
\end{remark}

\begin{proof}
We have
\begin{equation}\label{relentropy}
 \begin{aligned}
 \EE_1(t) - \EE_1(0) =& E(t) - E(0) \\
 &+\frac{1}{2} \int_{\Omega} \Big(\rho^\eps |u|^2\Big)(t,\cdot) - \frac{1}{2} \int_{\Omega} \Big(\rho^\eps |u|^2\Big)(0,\cdot) \\
 &- \int_{\Omega} \Big(\rho^\eps u^\eps \cdot u\Big)(t,\cdot) + \int_{\Omega} \Big(\rho^\eps u^\eps \cdot u\Big)(0,\cdot) \\
 &-\sum_{i=+,-} \int_{\Omega} \frac{\alpha_i^{\epsilon}}{(\gamma_i -1)} \Big(\rho_i^{\gamma_i} + \gamma_i \rho_i^{\gamma_i-1}(\rho_i^{\epsilon}-\rho_i) \Big)(t,\cdot)\\
 & +\sum_{i=+,-} \int_{\Omega} \frac{\alpha_i^{\epsilon}}{(\gamma_i -1)} \Big(\rho_i^{\gamma_i} + \gamma_i \rho_i^{\gamma_i-1}(\rho_i^{\epsilon}-\rho_i) \Big)(0,\cdot).
 \end{aligned}
\end{equation}

Let us consider mass equations with $\phi=\frac{|u|^2}{2}$ and add the two, we get:
\begin{align*}
 \frac{1}{2} \int_{\Omega} \Big(\rho^\eps |u|^2\Big)(t,\cdot) - \frac{1}{2} \int_{\Omega} \Big(\rho^\eps |u|^2\Big)(0,\cdot) 
 &=\frac{1}{2} \int_0^t\!\!\!\int_{\Omega} \rho^\eps \left(\partial_t|u|^2+u^\eps \cdot \nabla |u|^2\right) \nonumber \\
&= \int_0^t \!\!\!\int_\Omega \rho^\eps u\cdot \Big(\partial_t u + (u^{\epsilon} \cdot \nabla) u\Big).
\end{align*}

Let us now use the momentum equation by taking $\phi=u$, we get:
\begin{multline*}
 -\int_{\Omega} \Big(\rho^\eps u^\eps \cdot u\Big)(t,\cdot) + \int_{\Omega} \Big(\rho^\eps u^\eps \cdot u\Big)(0,\cdot) = -\int_0^t\!\!\!\int_{\Omega}\rho^\eps u^{\varepsilon} \cdot (\partial_t u + (u^{\varepsilon} \cdot \nabla) u )
 \\
 - \int_0^t\!\!\! \int_{\Omega} \rho_{+}^{\varepsilon \gamma_+}\div u 
 +\mu \int_0^t\!\!\!\int_{\Omega} \nabla u^{\varepsilon} : \nabla u + (\mu+\lambda) \int_0^t\!\!\!\int_{\Omega} \left(\operatorname{div} u^{\varepsilon}\right)(\operatorname{div} u),
\end{multline*}
where we can replace
\begin{align*}
- \int_0^t\!\!\! \int_{\Omega} \rho_{+}^{\varepsilon \gamma_+}\div u 
&=
- \int_0^t\!\!\! \int_{\Omega}(\alpha_+^\eps+\alpha_-^\eps) \rho_{+}^{\varepsilon \gamma_+}\div u \\
& = - \int_0^t\!\!\! \int_{\Omega}(\alpha_+^\eps p_+ (\rho_+^\eps) +\alpha_-^\eps p_-(\rho_-^\eps) )\div u .
\end{align*}

We can also rewrite 
\begin{align*}
 \int_{\Omega} \rho^\eps (\partial_t u + (u^{\epsilon} \cdot \nabla) u)\cdot (u-u^{\epsilon})
 =& \int_{\Omega} \rho^\eps (\partial_t u + (u \cdot \nabla) u)\cdot (u-u^{\epsilon}) \\
 &+ \rho^\eps \Big(((u^{\epsilon}-u)\cdot \nabla) u \Big)\cdot (u-u^{\epsilon}).
\end{align*}

By the equality of the pressures and the fact that $\alpha_+^\eps+\alpha_-^\eps=1$, we have
\begin{equation*}
 -\sum_{i=+,-} \int_{\Omega} \frac{\alpha_i^{\epsilon}}{(\gamma_i -1)} \Big(\rho_i^{\gamma_i} - \gamma_i \rho_i^{\gamma_i-1}\rho_i \Big)
 = \sum_{i=+,-}\int_{\Omega} \alpha_i^{\epsilon}\rho_i^{\gamma_i} 
 = \int_{\Omega} \rho_+^{\gamma_+} 
\end{equation*}
hence
\begin{multline*}
 - \sum_{i=+,-} \int_{\Omega} \frac{\alpha_i^{\epsilon}}{(\gamma_i -1)} \Big(\rho_i^{\gamma_i} - \gamma_i \rho_i^{\gamma_i-1}\rho_i \Big)(t,\cdot) + \int_{\Omega} \frac{\alpha_i^{\epsilon}}{(\gamma_i -1)} \Big(\rho_i^{\gamma_i} - \gamma_i \rho_i^{\gamma_i-1}\rho_i) \Big)(0,\cdot) \\
 =\int_0^t\!\!\!\int_\Omega (\alpha_+^\eps+\alpha_-^\eps)\partial_t \rho_+^{\gamma_+} 
 =\sum_{i=+,-}\int_0^t\!\!\!\int_\Omega \alpha_i^\eps \gamma_i \rho_i^{\gamma-1} \partial_t \rho_i 
\end{multline*}

For $i=+,-$, remark now that the mass equation with $\phi = \rho_i^{\gamma_i-1}$ gives
\begin{align*}
 -\int_{\Omega} \frac{\gamma_i \alpha_i^{\epsilon}}{(\gamma_i -1)} \rho_i^{\gamma_i-1}\rho_i^{\epsilon} (t,\cdot) +\int_{\Omega} \frac{\gamma_i\alpha_i^{\epsilon}}{(\gamma_i -1)} \rho_i^{\gamma_i-1}\rho_i^{\epsilon} (0,\cdot)
 & =- \int_0^t\!\!\!\int_\Omega \gamma_i\alpha_{i}^\eps\rho_{i}^\eps\rho_i^{\gamma_i-2} \Big(\partial_t \rho_i + (u^{\epsilon} \cdot \nabla) \rho_i\Big).
\end{align*}

We then have obtained
\begin{align*}
 -\sum_{i=+,-} \int_{\Omega} \frac{\alpha_i^{\epsilon}}{(\gamma_i -1)} \Big(\rho_i^{\gamma_i} + \gamma_i \rho_i^{\gamma_i-1}(\rho_i^{\epsilon}-\rho_i) \Big)(t,\cdot)&\\ 
 +\sum_{i=+,-} \int_{\Omega} \frac{\alpha_i^{\epsilon}}{(\gamma_i -1)} \Big(\rho_i^{\gamma_i} + \gamma_i \rho_i^{\gamma_i-1}(\rho_i^{\epsilon}-\rho_i) \Big)(0,\cdot)
 &=
 \sum_{i=+,-} \int_0^t\!\!\! \int_{\Omega} \gamma_i \alpha_i^{\epsilon} \rho_i^{\gamma_i-2}(\rho_i - \rho_i^{\epsilon})\partial_t \rho_i \\
 &+ \sum_{i=+,-} \int_0^t\!\!\!\int_{\Omega} \gamma_i \rho_i^{\gamma_i-2} (\alpha_i\rho_i u-\alpha_i^{\epsilon} \rho_i^{\epsilon} u^{\epsilon}) \cdot \nabla\rho_i \\
 &- \sum_{i=+,-} \int_0^t\!\!\!\int_{\Omega} \gamma_i \rho_i^{\gamma_i-2} \alpha_i\rho_i u \cdot \nabla\rho_i
\end{align*}
Using again that $p_+(\rho_+) = p_-(\rho_-)$ and $\alpha_++\alpha_-=1$, we can rewrite the last term
\begin{align*}
 - \sum_{i=+,-} \int_0^t\!\!\!\int_{\Omega} \gamma_i \rho_i^{\gamma_i-2} \alpha_i\rho_i u \cdot \nabla\rho_i
 =&- \sum_{i=+,-} \int_0^t\!\!\!\int_{\Omega} \alpha_i u \cdot \nabla p_i(\rho_i) 
 = -\int_0^t\!\!\!\int_{\Omega} u \cdot \nabla p_+(\rho_+)\\
 =& \int_0^t\!\!\!\int_{\Omega} p_+(\rho_+)\div u \\
 =& \int_0^t\!\!\!\int_{\Omega} (\alpha_+^\eps p_+(\rho_+)+\alpha_-^\eps p_-(\rho_-)) \div u
\end{align*}
where we have used that $u$ satisfies the Dirichlet boundary condition.

Therefore, using the energy estimate \eqref{energy_estimate} satisfied by $(\rho_+^{\epsilon}, \rho_-^{\epsilon}, \alpha_+^{\epsilon},u^{\epsilon}) $ and plugging all the previous computations into \eqref{relentropy}, we obtain:
\begin{align*}
 \EE_1(t) - \EE_1(0) &\leq -\mu \int_0^t\!\!\! \int_{\Omega}\left( \nabla u^\eps \right)^2 \dd x \dd t - \left(\lambda + \mu\right) \int_0^t\!\!\! \int_{\Omega} (\div u^\eps )^2 \dd x \dd t\\
 &+\int_0^t\!\!\! \int_{\Omega} \left(\alpha_+^{\varepsilon} \rho_{+}^{\varepsilon}+\alpha_-^{\varepsilon} \rho_-^{\epsilon}\right) (\partial_t u + u \cdot \nabla u)\cdot (u-u^{\epsilon}) \\
 &+ \int_0^t\!\!\! \int_{\Omega} \left(\alpha_+^{\varepsilon} \rho_{+}^{\varepsilon}+\alpha_-^{\varepsilon} \rho_-^{\epsilon}\right) (u^{\epsilon}-u)\cdot \nabla u \cdot (u-u^{\epsilon})\\
 &+ \int_0^t\!\!\! \int_{\Omega} \Big(\alpha_+^\eps p_+(\rho_+) + \alpha_-^\eps p_-(\rho_-) - \left(\alpha_+^{\varepsilon} p_+\left(\rho_{+}^{\varepsilon}\right)+\alpha_-^{\varepsilon} p_-\left(\rho_-^{\varepsilon}\right)\right) \Big) \div u \\
 &+ \sum_{i=+,-} \int_0^t\!\!\! \int_{\Omega} \gamma_i \alpha_i^{\epsilon} \rho_i^{\gamma_i-2}(\rho_i - \rho_i^{\epsilon})\partial_t \rho_i + \sum_{i=+,-} \int_{\Omega} \gamma_i \rho_i^{\gamma_i-2} (\alpha_i\rho_i-\alpha_i^{\epsilon} \rho_i^{\epsilon} u^{\epsilon}) \cdot \nabla\rho_i \\
 &+\mu \int_0^t\!\!\! \int_{\Omega} \nabla u^{\varepsilon} : \nabla u + (\mu+\lambda) \int_0^t\!\!\! \int_{\Omega} \left(\operatorname{div} u^{\varepsilon}\right)(\operatorname{div} u).
\end{align*}

Finally, we end the proof by noticing that by virtue of monotonicity of the operators $\nabla$ and $\div$, namely, we write
\begin{equation*}
 - \nabla u^{\varepsilon} : \nabla (u^\eps-u) = - \left(\nabla (u^\eps - u) \right)^2 - \nabla u : \nabla(u^\eps - u),
\end{equation*}
and
\begin{equation*}
 - \div u^\eps \div(u^\eps-u) = - \left( \div(u^\eps - u) \right)^2 - \div u \div(u^\eps - u).
\end{equation*}
\end{proof}

\end{appendices}


\adrese{}

\end{document}